						  \newtheorem{theorem}{Theorem}
                          \newtheorem{corollary}[theorem]{Corollary}
                          \newtheorem{proposition}[theorem]{Proposition}
\theoremstyle{definition} \newtheorem*{definition}{Definition}
\theoremstyle{definition} \newtheorem{hypothesis}{Hypothesis}
\theoremstyle{definition} \newtheorem{remark}{Remark}
\theoremstyle{definition} \newtheorem{example}{Example}
\begin{document}

\title{Continuous Dependence on Coefficients for Stochastic Evolution Equations with Multiplicative L\'evy Noise and Monotone Nonlinearity}
\author{Erfan Salavati, Bijan Z. Zangeneh\\ \\
Department of Mathematical Sciences\\
Sharif University of Technology\\
Tehran, Iran}
\date{}

\maketitle

\begin{abstract}
Semilinear stochastic evolution equations with multiplicative L\'evy noise and monotone nonlinear drift are considered. Unlike other similar works, we do not impose coercivity conditions on coefficients. We establish the continuous dependence of the mild solution with respect to initial conditions and also on coefficients which as far as we know, has not been proved before. As corollaries of the continuity result, we derive sufficient conditions for asymptotic stability of the solutions, we show that Yosida approximations converge to the solution and we prove that solutions have Markov property. Examples on stochastic partial differential equations and stochastic delay differential equations are provided to demonstrate the theory developed. The main tool in our study is an inequality which gives a pathwise bound for the norm of stochastic convolution integrals.
\end{abstract}

\section{Introduction}\label{section: introduction}

\subsection{Motivation}

Stochastic evolution equations have been an active area of research for many years. In the simplest case these equations are of the form
\begin{equation*}
    dX_t=AX_t dt + f(X_t) dt + g(X_t)d W_t
\end{equation*}
in a Hilbert space where $A$ is the infinitesimal generator of a $C_0$ semigroup of linear operators, $W_t$ is a Wiener process or more generally a martingale and $f$ and $g$ are assumed to be Lipschitz. Among studies with these assumptions one can note Da Prato and Zabczyk~\cite{DaPrato_Zabczyk_book}, in which the existence and uniqueness of the mild solution for stochastic evolution equations with Wiener noise is proved, as well as Kotelenez~\cite{Kotelenez-1984} in which the general martingale noise is considered.

The extensions of these results to the case of more general nonlinearity (non-Lipschitz) $f$, have been the subject of many papers. There are two main approaches in the study of non-Lipschitz stochastic evolution equations. First approach considers equations of the type
    \[ dX_t = F(X_t)dt + G(X_t)dW_t \]
in a Hilbert space $H$ equipped with a Banach space $B$ with dense embeddings  $B \subset H \subset B^*$, where $W_t$ is a Wiener process with values in a Hilbert space and $F$ and $G$ are generally assumed to be unbounded nonlinear operators that satisfy certain monotonicity and coercivity properties. This approach is called the variational method. For this approach see~\cite{Pardoux},~\cite{Krylov-Rozovskii} and~\cite{Rockner} for Wiener noise,~\cite{Gyongy} for general martingales and~\cite{Brzezniak-Liu-Zhu} for L\'evy noise.

The second approach is the semigroup approach to semilinear stochastic evolution equations with monotone drift, and considers equations of the form
\begin{equation}\label{equation: wiener_noise}
    dX_t=AX_t dt + f(X_t) dt + g(X_t)d W_t,
\end{equation}
where $W_t$ is a Wiener process and $f$ has a monotonicity assumption, i.e. there exists a real constant $M$ such that $\langle f(x)-f(y) , x-y \rangle \le M \|x-y\|^2$.

This approach has first appeared in deterministic context in the works of Browder~\cite{Browder} and Kato~\cite{Kato} and has been extended to stochastic evolution equations in~\cite{Zangeneh-Thesis} and~\cite{Zangeneh-Paper}.

Monotone operators are also called \emph{dissipative} operators in the literature and they are generalizations of decreasing real functions. Every operator of the form $f=g+h$ where $g$ is monotone and $h$ is Lipschitz, is a semimonotone operator and vice versa. Hence this approach is a generalization of the Lipschitz case. This generalization is useful since there are natural semimonotone functions which are not Lipschitz; examples include decreasing real functions, such as $-\sqrt[3]{x}$, or the sum of a non differentiable decreasing function with a Lipschitz function. Figure~\ref{figure:semimonotone} shows a semimonotone real function.

\begin{figure}[ht] \label{figure:semimonotone}
		\centering
		\includegraphics[scale=.5]{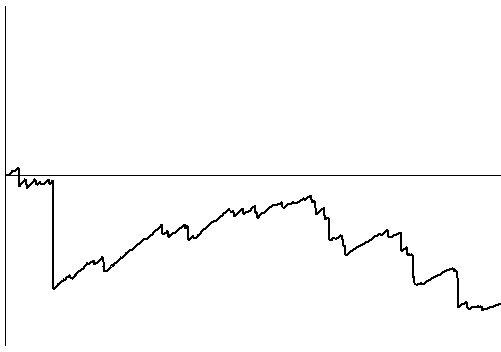}
		\caption{A semimonotone function}
\end{figure}

The semigroup approach to semilinear stochastic evolution equations with monotone nonlinearities has an advantage relative to the variational method since it does not require the coercivity. There are important examples, such as stochastic partial differential equations of hyperbolic type with monotone nonlinear terms, for which the generator does not satisfy the coercivity property and hence the variational method is not directly applicable to these equations. Pardoux~\cite{Pardoux} has developed a new theory for the application of the variational method to second order hyperbolic equations. But as is shown in Examples~\ref{example: finite_diemnsional_noise_hyperbolic} and~\ref{example:general_parabolic}, this problem can be treated directly in semigroup setting. Another advantage of the semigroup approach to semilinear stochastic evolution equations with monotone nonlinearities is that it allows a unified treatment of different problems, such as stochastic partial differential equations of hyperbolic and parabolic types and stochastic delay differential equations.

There are other works with this approach, e.g the exponential asymptotic stability of solutions in the case of Wiener noise has been studied in~\cite{Jahanipour-Zangeneh}, stochastic delay evolution equations has been studied in~\cite{Jahanipour-delay}, generalizing the previous results to stochastic functional evolution equations with coefficients depending on the past path of the solution is done in~\cite{Jahanipur-functional-stability}, a stopped version of~\eqref{main_equation} in case of Wiener noise has been studied in~\cite{Hamedani-Zangeneh-existence}, the large deviation principle for the case of Wiener noise is studied in~\cite{Dadashi-Zangeneh}. A limiting problem of such equations arising from random motion of highly elastic strings has been considered in~\cite{Zamani-Zangeneh-random-motion}. Finally, the stationarity of a mild solution to a stochastic evolution equation with a monotone nonlinear drift and Wiener noise is studied in~\cite{Zangeneh_Nualart}.

While the literature for equations with continuous noise is quite rich, not many works are done about equations with jump noise. In recent years some research has appeared on stochastic evolution equations with L\'evy (jump) noise, see e.g. Peszat and Zabczyk~\cite{Peszat-Zabczyk}, Albeverio, Mandrekar and R\"udiger~\cite{Albeverio-Mandrekar-Rudiger-2009} and Marinelli, Pr\'ev\^ot and R\"ockner~\cite{Marinelli-Prevot-Rockner} for the case of Lipschitz coefficients and Brze\'zniak, Liu and Zhu~\cite{Brzezniak-Liu-Zhu} for coercive and monotone coefficients with variational method. There are a number of works that have considered monotone (dissipative) coefficients with additive L\'evy noise, see e.g Peszat and Zabczyk~\cite{Peszat-Zabczyk}.

We should mention the article by Marinelli and R\"ockner~\cite{Marinelli-Rockner-wellposedness} which considers monotone nonlinear drift and multiplicative Poisson noise on certain function spaces and proves the existence, uniqueness and regular dependence of the mild solution on initial data. They impose an additional positivity assumption on the semigroup and the drift term is the Nemitsky operator associated with a real monotone function. Their idea is to regularize the monotone nonlinearity $f$ by its Yosida approximation $f_\lambda(x)= \lambda^{-1}(x-(I+\lambda f)^{-1}(x))$. We will treat their result as a special case of our theory in Example~\ref{example: finite_diemnsional_noise}.

In this article we have relaxed these assumptions, the semigroup is any exponentially growing $C_0$-semigroup on any seperable Hilbert space and the drift term is a general semimonotone operator. Our method is completely different.

The main contribution of this article is Theorem~\ref{theorem: continuity I} in section~\ref{section: Continuity With Respect to Parameter} which shows the continuous dependence of the solution of~\eqref{main_equation} on initial conditions and coefficients which as far as we know, has not been proved before in the literature. The problem of continuity, apart of having its own intrinsic interest, is also motivated by several other considerations, such as the study of the stability of models based on stochastic partial differential equations (SPDEs) and the convergence of numerical approximation schemes (\cite{Marinelli-DiPersio-Ziglio}). We mention below some other works in the literature about continuous dependence. In the context of Wiener noise,~\cite{DaPrato_Zabczyk_paper} considers the case that the semigroup is analytic and $f$ is locally Lipschitz, and shows that the solution is a continuous function of the noise coefficients.~\cite{Zangeneh-Thesis} generalizes this result to stochastic evolution equations with Wiener noise and monotone nonlinearity and shows that the solution depends continuously on initial condition and coefficients (including $A$). In the context of Poisson noise,~\cite{Albeverio-Mandrekar-Rudiger-2009} proves the continuous dependence on initial data and coeffincients for the case of Lipshcitz coefficients and~\cite{Marinelli-Prevot-Rockner} proves continuous dependence on initial data and under additional assumptions proves G\^ateaux and Fr\'echet differentiability of the solution w.r.t initial data in the case of Lipschitz coefficients. We should mention the recent article~\cite{Marinelli-DiPersio-Ziglio} in which is shown the continuous dependence of the solution of~\eqref{main_equation} on all coefficients including $A$ for the Lipschitz case. For equations with monotone coefficients,~\cite{Marinelli-Rockner-wellposedness} proves the continuous dependence of the solution w.r.t initial condition.~\cite{Jahanipur-boundedness} proves the continuous dependence on coefficients in the case of Wiener noise with monotone nonlinearity.

In the remainder of the introductory section we provide some preliminaries which will be useful in next sections.

\subsection{Stochastic Integration}

The main equation that we wish to study, consists of two noise terms, which are integrals with respect to a Wiener process and a compensated Poisson random measure. For the Wiener process term we use cylindrical Wiener processes on a Hilbert space, but we mention that the generalization of our results to ordinary Wiener processes on Hilbert spaces is straight forward. For the definition and properties of stochastic integration with respect to cylindrical Wiener processes see Peszat and Zabczyk~\cite{Peszat-Zabczyk}.

Compensated Poisson random measures naturally arise when we have a L\'evy process. A L\'evy process is a process which has independent and stationary increments and can take values in a Hilbert or Banach space. General examples of L\'evy processes are Wiener process (Brownian motion), which is a L\'evy process with continuous trajectories, Poisson process, and compound Poisson processes.

For a detailed treatment of L\'evy processes and its relation to compensated Poisson random measures and definition and properties of integration with respect to compensated Poisson random measures we refer the reader to~\cite{Peszat-Zabczyk}.

\subsection{Monotone Operators}

Theory of monotone operators is an important branch in the theory of nonlinear equations and has first appeared in the works of Minty in 1961.

\begin{definition}
    $f:H\to H$ is called \emph{semi-monotone} if there exists a real constant $M$ such that
    \[ \forall x,y \in H: \qquad \langle f(x)-f(y) , x-y \rangle \le M \| x-y \|^2 \]
    and is called \emph{monotone} if $M=0$.
\end{definition}
{\noindent Note that semi-monotone condition is weaker than Lipschitz condition.}

Consider the following deterministic equation in a Banach space.
\begin{equation} \begin{array}{l}
    \frac{d}{dt}u(t)=f(t,u(t)),\qquad t\ge 0\\
    u(0)=u_0
\end{array}\end{equation}
The standard existence and uniqueness theorems for this equation, assume that $f$ is Lipschitz. In finite dimension, the continuity is sufficient to ensure the existence, but on Banach spaces there are examples which show that continuity is not sufficient (see for example,~\cite{Dieudonne}, problem 5, p.290). Works of Browder~\cite{Browder} and Kato~\cite{Kato} showed that on Hilbert spaces if we assume both continuity and semi-monotonicity then the solution exists and is unique. They also extended their proofs to semilinear evolution equations
\begin{equation} \begin{array}{l}
    \frac{d}{dt}u(t)=A(t) u(t) + f(t,u(t)),\qquad t\ge 0\\
    u(0)=u_0
\end{array}\end{equation}
where $A(t)$ is the generator of an evolution operator.

Kato~\cite{Kato} assumes \emph{demicontinuity} condition for $f$, which is weaker than continuity.

\begin{definition}
    $f:H\to H$ is called \emph{demicontinuous} if whenever $x_n \to x$, strongly in $H$ then $f(x_n)\rightharpoonup f(x)$ weakly in $H$.
\end{definition}

\begin{theorem}[Kato~\cite{Kato}]\label{Theorem: Kato}
    Let $f:[0,T]\times H \to H$ be semi-monotone, demicontinuous and maps bounded sets into bounded sets, Then the equation
    \[ \frac{du}{dt}=A u(t) + f(t,u(t)) \]
    with initial condition $u_0$ has a mild solution and the solution is unique.
\end{theorem}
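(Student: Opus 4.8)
The plan is to follow Kato's strategy: reduce the semi-monotone problem to a monotone one, regularize the nonlinearity to a Lipschitz one handled by classical mild-solution theory, derive $\lambda$-uniform a priori bounds, and then pass to the limit using monotonicity together with demicontinuity. First I would absorb the constant $M$. Since $\langle f(t,x)-f(t,y),x-y\rangle \le M\|x-y\|^2$, the shifted nonlinearity $\tilde f(t,x):=f(t,x)-Mx$ is monotone (in the sense of the Definition), still demicontinuous and bounded-set-to-bounded-set, while $\tilde A:=A+MI$ generates the $C_0$-semigroup $\tilde S(t)=e^{Mt}S(t)$. Replacing $(A,f)$ by $(\tilde A,\tilde f)$ we may assume $M=0$, i.e. $f(t,\cdot)$ monotone for each $t$. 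As $-f(t,\cdot)$ is monotone, everywhere defined and demicontinuous, it is maximal monotone (Minty--Browder), so its Yosida approximation is available: set $J_\lambda(t,\cdot):=(I-\lambda f(t,\cdot))^{-1}$ and $f_\lambda(t,x):=\lambda^{-1}(J_\lambda(t,x)-x)=f(t,J_\lambda(t,x))$. Then $f_\lambda(t,\cdot)$ is globally Lipschitz of order $1/\lambda$ and monotone, $\|f_\lambda(t,x)\|\le\|f(t,x)\|$, and $J_\lambda(t,x)\to x$, $f_\lambda(t,x)\to f(t,x)$ as $\lambda\to 0$.

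Next, for each fixed $\lambda$ the regularized equation $u_\lambda'=Au_\lambda+f_\lambda(t,u_\lambda)$, $u_\lambda(0)=u_0$, has a unique global mild solution $u_\lambda\in C([0,T];H)$ by the classical Banach fixed point argument on $C([0,T];H)$, since $f_\lambda$ is uniformly Lipschitz in $x$ (with enough measurability in $t$, which I would make precise). I would then establish, uniformly in $\lambda$, the bounds $\sup_{t\le T}\|u_\lambda(t)\|\le C$ and weak $L^2(0,T;H)$-compactness of $\{f_\lambda(\cdot,u_\lambda(\cdot))\}_\lambda$, using the monotonicity of $f_\lambda$ and the bounded-to-bounded hypothesis. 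To run the underlying energy computation legitimately on the merely mild solution $u_\lambda$, I would either simultaneously replace $A$ by its Yosida approximation $A_n$ (so the approximants are genuine $C^1$ solutions and $\tfrac{d}{dt}\|u_{n,\lambda}\|^2$ makes sense), and remove $n$ afterwards, or invoke the deterministic analogue of the pathwise bound for convolution integrals advertised in the abstract.

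Then I would pass to the limit $\lambda\to 0$. Estimating $u_\lambda-u_\mu$ via $\langle f_\lambda(t,u_\lambda)-f_\mu(t,u_\mu),u_\lambda-u_\mu\rangle$ and the resolvent identities $f_\lambda=f(\cdot,J_\lambda(\cdot))$, $J_\lambda(t,x)-x=\lambda f_\lambda(t,x)$ gives the standard bound showing $(u_\lambda)$ is Cauchy in $C([0,T];H)$; let $u$ be its limit. Along a subsequence $f_\lambda(\cdot,u_\lambda(\cdot))\rightharpoonup g$ weakly in $L^2(0,T;H)$, and passing to the limit in the mild formulation yields $u(t)=S(t)u_0+\int_0^t S(t-s)g(s)\,ds$. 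Since $J_\lambda(s,u_\lambda(s))-u_\lambda(s)=\lambda f_\lambda(s,u_\lambda(s))\to 0$ we get $J_\lambda(s,u_\lambda(s))\to u(s)$, and $f_\lambda(s,u_\lambda(s))=f(s,J_\lambda(s,u_\lambda(s)))$, so demicontinuity forces $g(s)=f(s,u(s))$ (a Minty-type argument covers the merely weak convergence of the $f$-values). Hence $u$ is a mild solution. For uniqueness, if $u,v$ are mild solutions then $u(t)-v(t)=\int_0^t S(t-s)\big(f(s,u(s))-f(s,v(s))\big)\,ds$, and the monotonicity of $f$ — again exploited through a regularization or the convolution inequality to justify differentiating $\|u-v\|^2$ — yields $\|u(t)-v(t)\|^2\le C\int_0^t\|u(s)-v(s)\|^2\,ds$, so $u\equiv v$ by Gronwall.

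The main obstacle I anticipate is the limit passage through the nonlinearity: identifying the weak limit $g$ with $f(\cdot,u(\cdot))$ despite only demicontinuity, which is exactly where the maximal-monotone/Minty machinery is indispensable. A secondary technical difficulty is that the mild formulation is not differentiable, so the a priori energy estimates and the uniqueness computation both require the auxiliary regularization of $A$ (or the pathwise convolution bound); pinning down the joint $t$-regularity of $f$ needed for the fixed-point step for $f_\lambda$ is a minor point that I would also address.
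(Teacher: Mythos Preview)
The paper does not actually prove this theorem: it is stated as a classical result of Kato, and immediately after the statement the authors write ``For a proof of this theorem and other properties of monotone operators see~\cite{Tanabe}.'' So there is no in-paper proof to compare your proposal against.

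That said, your outline is the standard Kato--Browder argument one finds in the cited references: reduce to the monotone case by shifting, regularize $-f$ via its Yosida approximation $f_\lambda$, solve the Lipschitz problem by fixed point, obtain $\lambda$-uniform a priori bounds from monotonicity, show $(u_\lambda)$ is Cauchy in $C([0,T];H)$ using the resolvent identities, and identify the weak limit of $f_\lambda(\cdot,u_\lambda)$ with $f(\cdot,u)$ through maximal monotonicity/demicontinuity. The technical caveats you flag --- justifying energy computations on mild solutions (via a second regularization of $A$ or an It\^o-type convolution inequality) and the Minty-type identification of the limit --- are exactly the points where care is needed, and you have identified them correctly. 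Your sketch is sound and in line with the literature the paper defers to.
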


For a proof of this theorem and other properties of monotone operators see~\cite{Tanabe}.

\subsection{The Main equation}

Let $H$ be a separable Hilbert space and $S(t)$ a $C_0$-semigroup of linear operators on $H$ with generator $A$. We are concerned with this equation,
\begin{equation}\label{main_equation}
    dX_t=AX_t dt+f(t,X_t) dt + g(t,X_{t-})d W_t + \int_E k(t,\xi,X_{t-}) \tilde{N}(dt,d\xi),
\end{equation}
where $W_t$ is a cylindrical Wiener process on another Hilbert space, $\tilde{N}(dt,d\xi)$ is a compensated Poisson random measure on a Banach space $U$ and independent of $W_t$. We assume $f$ is semimonotone and $g$ and $k$ are Lipschitz and have linear growth. In section~\ref{section: Assumptions} the assumptions on coefficients are stated precisely.

The main results of this article are Theorem~\ref{theorem: continuity I} and Corollary~\ref{corollary: continuity II}, proved in section~\ref{section: Continuity With Respect to Parameter} which state that the solutions equation~\eqref{main_equation} depend continuously, in an appropriate sense, on initial condition and also on coefficients. Several consequences of these results are also provided. In corollary~\ref{corollary: Exponential Stability} a sufficient condition for exponential asymptotic stability of the solutions is derived. In section~\ref{section: Yosida Approximation} we introduce the well known Yosida approximations of equation~\ref{main_equation} and show that the solutions of them converge to the solution of~\eqref{main_equation}. In section~\ref{section: Markov Property} the Markov property of the mild solutions is proved. We will provide some concrete examples to which our results apply. These examples consist of semilinear stochastic partial differential equations and a stochastic delay differential equation. Some of the statements have been presented previously in~\cite{Proceedings}.

\subsection{Stochastic Convolution Integrals} \label{subsection:stochastic_convolution_integrals}

Let $Z(t)$ be a stochastic process. Consider the equation $dX(t)= A X(t) dt + dZ(t)$ with an initial condition $X(0)$. Since $A$ is not defined on all of $H$ this equation may have no solutions, for example when $X(0)\notin Domain(A)$. In the case that $Z(t)$ is an $H$-valued semimartingale, a weaker notion of solution for this equation, i.e. \emph{mild solution} is defined as $X(t)=S(t) X(0)+\int_0^t S(t-s) dZ(s)$, where the integral is a stochastic integral. This is called a stochastic convolution integral (for the definition and properties of semimartingales and stochastic integration with respect to them the reader is referred to Metivier~\cite{Metivier}).

Now we introduce the concept of mild solution for~\eqref{main_equation}.
\begin{definition}
    By a \emph{mild solution} of equation~\eqref{main_equation} with initial condition $X_0$ we mean an adapted c\`adl\`ag process $X_t$ that satisfies
    \begin{multline}\label{mild_solution}
        X_t=S_t X_0+\int_0^t S_{t-s}f(s,X_s) ds+\int_0^t{S_{t-s}g(s,X_{s-})d W_s}\\
        + \int_0^t{\int_E {S_{t-s}k(s,\xi,X_{s-})} \tilde{N}(ds,d\xi).}
    \end{multline}
\end{definition}

Inequalities concerning upper bounds for the norm of stochastic convolution integrals are useful in studying stochastic evolution equations. One of the first such inequalities was that of Kotelenez~\cite{Kotelenez-1982} which is a maximal inequality for stochastic convolution integrals. Kotelenez~\cite{Kotelenez-1982} uses this inequality to prove the existence of a c\`adl\`ag version for stochastic convolution integrals. From now on, we always assume that stochastic convolution integrals are c\`adl\`ag. Later Kotelenez~\cite{Kotelenez-1984} proved a stronger inequality which was a stopped Doob inequality.

\begin{theorem}[Kotelenez,~\cite{Kotelenez-1984}] \label{Theorem: Kotelenez inequality}
    Assume $\alpha\ge 0$. There exists a constant $\mathbf{C}$ such that for any $H$-valued c\`adl\`ag locally square integrable martingale $M_t$ we have
    \[ \mathbb{E} \sup_{0\le t\le T} \|\int_0^t S_{t-s}dM_s\|^2 \le \mathbf{C} e^{4\alpha T} \mathbb{E}[M]_T.\]
\end{theorem}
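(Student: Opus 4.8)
The approach I would take is the classical one: reduce to the case of a \emph{bounded} generator by Yosida approximation, settle that case with It\^o's formula and a Gronwall argument, and then pass to the limit.

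\medskip\noindent\textbf{Setup and reduction.} Since $S_t$ is a $C_0$-semigroup it satisfies $\|S_t\|\le Me^{\alpha t}$ for some $M\ge 1$; replacing the inner product of $H$ by the equivalent one induced by $\|x\|_1:=\sup_{t\ge 0}e^{-\alpha t}\|S_tx\|$ (which satisfies $\|x\|\le\|x\|_1\le M\|x\|$, hence changes the final constant by at most a factor $M^2$) we may assume $\|S_t\|\le e^{\alpha t}$, equivalently that $A-\alpha I$ is dissipative, i.e. $\langle Ax,x\rangle\le\alpha\|x\|^2$ for $x\in D(A)$. For $n>\alpha$ let $A_n:=nA(nI-A)^{-1}=n^2(nI-A)^{-1}-nI$ be the Yosida approximations. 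These are bounded, generate uniformly continuous semigroups $S^n_t=e^{tA_n}$ with $\|S^n_t\|\le e^{\alpha_n t}$ where $\alpha_n:=n\alpha/(n-\alpha)\downarrow\alpha$, and $S^n_tx\to S_tx$ for every $x\in H$, uniformly on compact time intervals. Consequently $Y^n_t:=\int_0^t S^n_{t-s}\,dM_s$ converges to $Y_t:=\int_0^t S_{t-s}\,dM_s$ (in $L^2$, uniformly in $t\le T$, using the earlier maximal inequality of Kotelenez and a routine approximation lemma), so it suffices to prove the asserted bound for each $Y^n$ with a constant independent of $n$ and then let $n\to\infty$.

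\medskip\noindent\textbf{The bounded-generator case.} Fix $n$. Since $A_n$ is bounded, $Y^n$ is the strong solution of $dY^n_t=A_nY^n_t\,dt+dM_t$, $Y^n_0=0$. Applying the It\^o formula for $H$-valued semimartingales to $\|Y^n_t\|^2$: the finite-variation drift contributes $2\int_0^t\langle Y^n_s,A_nY^n_s\rangle\,ds\le 2\alpha_n\int_0^t\|Y^n_s\|^2\,ds$, the martingale part contributes $N_t:=2\int_0^t\langle Y^n_{s-},dM_s\rangle$, and the quadratic-variation term is $[M]_t$ (the absolutely continuous drift does not contribute to the bracket). Hence
\[
\|Y^n_t\|^2\le 2\alpha_n\int_0^t\|Y^n_s\|^2\,ds+N_t+[M]_t .
\]
Take $\sup_{t\le T}$ and expectations. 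For the martingale term the Burkholder--Davis--Gundy inequality gives $\mathbb{E}\sup_{t\le T}|N_t|\le C\,\mathbb{E}\big(\int_0^T\|Y^n_{s-}\|^2\,d[M]_s\big)^{1/2}\le C\,\mathbb{E}\big(\sup_{s\le T}\|Y^n_s\|\cdot[M]_T^{1/2}\big)$, and Young's inequality then absorbs $\tfrac12\,\mathbb{E}\sup_{s\le T}\|Y^n_s\|^2$ into the left-hand side at the cost of a constant multiple of $\mathbb{E}[M]_T$.

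\medskip\noindent\textbf{Gronwall, limit, and the main obstacle.} Writing $\phi(t):=\mathbb{E}\sup_{s\le t}\|Y^n_s\|^2$ (finite by the earlier maximal inequality), the previous step yields $\phi(T)\le 4\alpha_n\int_0^T\phi(t)\,dt+\mathbf{C}'\,\mathbb{E}[M]_T$; since $\phi$ is nondecreasing, Gronwall's lemma gives $\phi(T)\le \mathbf{C}'e^{4\alpha_nT}\,\mathbb{E}[M]_T$. Letting $n\to\infty$, using $Y^n_t\to Y_t$ (uniformly in $t\le T$, a.s.\ along a subsequence), Fatou's lemma and $\alpha_n\downarrow\alpha$ give the claim with $\mathbf{C}:=M^2\mathbf{C}'$. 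The main technical obstacle is the limit passage $n\to\infty$: one needs a priori that $Y_t$ admits a c\`adl\`ag version and that $Y^n\to Y$ in a strong enough sense, which is precisely where Kotelenez's earlier inequality (and the c\`adl\`ag version of the stochastic convolution) is used; a secondary point requiring care is the correct form of It\^o's formula and of the BDG inequality for $H$-valued, possibly purely discontinuous, martingales, where $[M]$ must be read in the trace/tensor sense.
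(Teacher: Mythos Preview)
The paper does not prove this theorem at all: it is stated with attribution to Kotelenez~\cite{Kotelenez-1984} and immediately followed by a remark, with no proof given. It is quoted purely as background, and the paper's own arguments rely instead on the pathwise It\^o-type inequality of Zangeneh (Theorem~\ref{theorem:ito type inequality}). So there is no ``paper's own proof'' to compare your proposal against.

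That said, your outline is the standard route to Kotelenez's inequality and is essentially sound: Yosida-approximate $A$ by bounded $A_n$, apply It\^o's formula to $\|Y^n_t\|^2$, control the local-martingale term via BDG and Young, close with Gronwall, and pass to the limit. Two small comments. First, the renorming step is unnecessary here since the paper already assumes $\|S_t\|\le e^{\alpha t}$. Second, you correctly flag the circularity risk in the limit passage: you need some a priori control on $Y^n$ and a c\`adl\`ag version of $Y$ before you can invoke Fatou, and this is exactly what Kotelenez's earlier 1982 submartingale inequality provides; alternatively one can stop at level $R$, obtain the bound with $\mathbb{E}[M]_{T\wedge\tau_R}$, and let $R\to\infty$ using monotone convergence, which avoids appealing to an external maximal inequality. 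Either way the argument goes through, but since the paper simply cites the result, any correct proof you write is ``extra'' relative to what the paper contains.
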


\begin{remark}
    Hamedani and Zangeneh~\cite{Hamedani-Zangeneh-stopped} generalized this inequality to a stopped maximal inequality for $p$-th moment ($0<p<\infty$) of stochastic convolution integrals.
\end{remark}

Usual inequalities such as Theorem~\ref{Theorem: Kotelenez inequality} concern the expectation of the norm of stochastic convolution integrals and because of the presence of monotone nonlinearity in equation~\eqref{main_equation}, they are not applicable to~\eqref{main_equation}. For this reason we will use the following pathwise inequality for the norm of stochastic convolution integrals which has been proved in Zangeneh~\cite{Zangeneh-Paper}.
\begin{theorem}[It\^o type inequality, Zangeneh~\cite{Zangeneh-Paper}]\label{theorem:ito type inequality}
    Let $Z_t$ be an $H$-valued c\`adl\`ag locally square integrable semimartingale. If
    \[ X_t=S_t X_0 + \int_0^t S_{t-s}dZ_s, \]
    then, a.s.
    \begin{equation*}
        \lVert X_t \rVert ^2 \le e^{2\alpha t}\lVert X_0 \rVert ^2 + 2 \int_0^t {e^{2\alpha (t-s)}\langle X_{s-} , d Z_s \rangle}+\int_0^t {e^{2\alpha (t-s)}d[Z]_s},
    \end{equation*}
    where $[Z]_t$ is the quadratic variation process of $Z_t$.
\end{theorem}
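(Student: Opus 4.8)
The key issue: we can't differentiate $X_t$ directly since it's only a mild solution. The standard trick is to approximate via Yosida approximations $A_n = nA(nI-A)^{-1}$ (or $R_n = n(nI-A)^{-1}$), apply Itô's formula to the genuine strong solutions, then pass to the limit. Let me think about this.\emph{Proof sketch.} The obstacle is that $X_t$, being only a mild solution of $dX = AX\,dt + dZ$, need not lie in the domain of $A$, so we cannot apply It\^o's formula to $\|X_t\|^2$ directly. The plan is to regularize $A$ by its Yosida approximation $A_n = nA(nI-A)^{-1} = n^2(nI-A)^{-1} - nI$, which is bounded, generates a uniformly continuous semigroup $S^n_t = e^{tA_n}$, and satisfies $A_n x \to Ax$ for $x\in D(A)$ and $S^n_t x \to S_t x$ strongly, uniformly on compacts. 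Set $R_n = n(nI-A)^{-1}$, so $R_n \to I$ strongly, $\|R_n\|$ is bounded (since $S_t$ has growth bound $\alpha$, one gets $\|R_n\|\le n/(n-\alpha)$ for $n>\alpha$), and define the regularized process
\[
    X^n_t = S^n_t R_n X_0 + \int_0^t S^n_{t-s} R_n\, dZ_s .
\]

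First I would check that $X^n_t$ is the genuine (strong) solution of $dX^n_t = A_n X^n_t\,dt + R_n\,dZ_t$ with $X^n_0 = R_n X_0$: this is a standard fact because $A_n$ is bounded and $R_n X_0, R_n\,dZ_s$ lie in $D(A_n)=H$, and it follows by differentiating the convolution (or by a stochastic Fubini argument). Then I would apply the It\^o formula for the square of the norm of an $H$-valued semimartingale (as in M\'etivier~\cite{Metivier}) to $\|X^n_t\|^2$, obtaining
\[
    \|X^n_t\|^2 = \|R_n X_0\|^2 + 2\int_0^t \langle X^n_{s-}, A_n X^n_s\rangle\,ds + 2\int_0^t \langle X^n_{s-}, R_n\,dZ_s\rangle + \int_0^t \|R_n\|_{\mathrm{op}}^2\,d[Z]_s
\]
(more precisely the quadratic-variation term is $\int_0^t d[R_n Z]_s$, bounded by $\int_0^t d[Z]_s$ up to the operator norm). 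The crucial structural input is the dissipativity estimate $\langle x, A_n x\rangle \le \alpha\|x\|^2$ for all $x\in H$, which holds because the growth bound $\|S_t\|\le e^{\alpha t}$ forces $A-\alpha I$ to be dissipative, and Yosida approximations of dissipative operators remain dissipative; hence $2\langle X^n_{s-}, A_n X^n_s\rangle \le 2\alpha\|X^n_s\|^2$ (using right-continuity so $X^n_{s-}=X^n_s$ for a.e.\ $s$). Plugging this in and applying the integral form of Gronwall's inequality (the version allowing a driving measure, i.e. multiplying through by $e^{-2\alpha s}$ and integrating) yields
\[
    \|X^n_t\|^2 \le e^{2\alpha t}\|R_n X_0\|^2 + 2\int_0^t e^{2\alpha(t-s)}\langle X^n_{s-}, R_n\,dZ_s\rangle + \int_0^t e^{2\alpha(t-s)}\|R_n\|_{\mathrm{op}}^2\,d[Z]_s .
\]

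Finally I would pass to the limit $n\to\infty$. The deterministic terms converge since $R_n X_0 \to X_0$ and $\|R_n\|_{\mathrm{op}}\to 1$. For the left side and the stochastic integral term one uses that $X^n_t \to X_t$ in $L^2$ uniformly on $[0,T]$ (from strong convergence $S^n_t R_n \to S_t$ together with the Kotelenez-type maximal inequality of Theorem~\ref{Theorem: Kotelenez inequality} applied to the martingale part and a dominated-convergence argument for the finite-variation part), so along a subsequence the convergence is almost sure uniformly in $t$; and the stochastic convolution $\int_0^t S^n_{t-s}R_n\,dZ_s$ converges to $\int_0^t S_{t-s}\,dZ_s$, whence $\langle X^n_{s-}, R_n\,dZ_s\rangle \to \langle X_{s-}, dZ_s\rangle$ in the appropriate sense (convergence of the stochastic integrals in probability, uniformly in $t$, using that the integrands converge in the right $L^2$-norm against $d[Z]$). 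Taking the limit through the inequality gives the claim almost surely, simultaneously for all $t$ by right-continuity of both sides. \emph{The main obstacle} I anticipate is the justification of the limit interchange in the stochastic convolution and the inner-product stochastic integral — in particular making sure the convergence is uniform in $t$ and handling the jump terms of $Z$ — which requires care with the semimartingale decomposition of $Z$ and the corresponding maximal inequalities rather than any new idea.
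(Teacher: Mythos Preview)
The paper does not prove this theorem at all: it is quoted as Theorem~\ref{theorem:ito type inequality} from Zangeneh~\cite{Zangeneh-Paper} and used as a black box in the proof of Theorem~\ref{theorem: continuity I}. So there is no ``paper's own proof'' to compare against.

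That said, your Yosida-approximation strategy is precisely the standard route (and essentially the one Zangeneh uses in the cited reference): regularize $A$ to a bounded $A_n$, apply the genuine It\^o formula to $\|X^n_t\|^2$, exploit dissipativity to absorb the $\langle X^n, A_n X^n\rangle$ term, and pass to the limit. One small technical slip worth flagging: from $\|S_t\|\le e^{\alpha t}$ you correctly infer that $A-\alpha I$ is dissipative, but it does \emph{not} immediately follow that $\langle A_n x,x\rangle\le \alpha\|x\|^2$, since $A_n$ is the Yosida approximation of $A$, not of $A-\alpha I$. What you actually get is $\langle A_n x,x\rangle\le \alpha\|R_n\|^2\|x\|^2\le \alpha\bigl(\tfrac{n}{n-\alpha}\bigr)^2\|x\|^2$ (compute $\langle A_n x,x\rangle=\langle Ay,y\rangle-\tfrac{1}{n}\|Ay\|^2$ with $y=R_n x$), which is harmless since the constant tends to $\alpha$ as $n\to\infty$ and you are passing to the limit anyway. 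The limit justification you outline---strong convergence $S^n_t R_n\to S_t$ plus a Kotelenez-type maximal inequality for the martingale part and dominated convergence for the finite-variation part---is the right toolkit; it is routine but, as you anticipate, does require separating the semimartingale into its local-martingale and bounded-variation pieces to make the convergence of $\int_0^t e^{2\alpha(t-s)}\langle X^n_{s-},R_n\,dZ_s\rangle$ rigorous.
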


\section{The Assumptions}\label{section: Assumptions}

Let $H$ be a separable Hilbert space with inner product $\langle \, , \, \rangle$. Let $S_t$ be a $C_0$ semigroup on $H$ with infinitesimal generator $A:D(A)\to H$. Furthermore we assume the exponential growth condition on $S_t$ holds, i.e. there exists a constant $\alpha$ such that $\| S_t \| \le e^{\alpha t}$. If $\alpha=0$, $S_t$ is called a contraction semigroup. We denote by $L_{HS}(K,H)$ the space of Hilbert-Schmidt mappings from a Hilbert space $K$ to $H$.

Let $(\Omega,\mathcal{F},\mathcal{F}_t,\mathbb{P})$ be a filtered probability space. Let $(E,\mathcal{E})$ be a measurable space and $N(dt,d\xi)$ a Poisson random measure on $\mathbb{R}^+ \times E$ with intensity measure $dt \nu(d\xi)$. Our goal is to study equation~\eqref{main_equation} in $H$, where $W_t$ is a cylindrical Wiener process on a Hilbert space $K$ and $\tilde{N}(dt,d\xi)=N(dt,d\xi)-dt\nu(d\xi)$ is the compensated Poisson random measure corresponding to $N$. We assume that $N$ and $W_t$ are independent. We also assume the following,

\begin{hypothesis}\label{main_hypothesis}
    \begin{description}

        \item[(a)] $f(t,x,\omega):\mathbb{R}^+\times H\times \Omega \to H$ is measurable, $\mathcal{F}_t$-adapted, demicontinuous with respect to $x$ and there exists a constant $M$ such that
            \[ \langle f(t,x,\omega)-f(t,y,\omega),x-y \rangle \le M \|x-y\|^2,\]

        \item[(b)] $g(t,x,\omega):\mathbb{R}^+\times H\times \Omega \to L_{HS}(K,H)$ and $k(t,\xi,x,\omega):\mathbb{R}^+\times E\times H\times \Omega \to H$ are predictable and there exists a constant $C$ such that
            \[ \| g(t,x,\omega)-g(t,y,\omega)\|_{L_{HS}(K,H)}^2 + \int_{E}\|k(t,\xi,x)-k(t,\xi,y)\|^2 \nu(d\xi) \le C \|x-y \|^2,\]

        \item[(c)] There exists a constant $D$ such that
            \[ \| f(t,x,\omega)\|^2 + \| g(t,x,\omega)\|_{L_{HS}(K,H)}^2 + \int_{E}\|k(t,\xi,x)\|^2 \nu(d\xi) \le D(1+\|x\|^2),\]
        \item[(d)] $X_0(\omega)$ is $\mathcal{F}_0$ measurable and square integrable.
    \end{description}

\end{hypothesis}

The following theorem states that equation~\eqref{main_equation} has a unique mild solution. For the proof see \cite{Archive-existence}.

\begin{theorem}[Existence and Uniqueness of the Mild Solution]\label{theorem:existence and uniqueness}
    Under the assumptions of Hypothesis~\ref{main_hypothesis}, equation~\eqref{main_equation} has a unique square integrable c\`adl\`ag mild solution with initial condition $X_0$.
\end{theorem}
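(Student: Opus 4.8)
The plan is to establish uniqueness and existence separately, in both cases relying on the It\^o type inequality of Theorem~\ref{theorem:ito type inequality}, the semimonotonicity of $f$, and Gronwall's lemma, existence being reached by regularizing the drift by a Yosida-type approximation. For uniqueness, if $X,Y$ are two square integrable c\`adl\`ag mild solutions with the same initial condition, then $X_t-Y_t=\int_0^t S_{t-s}\,dZ_s$ with $Z$ the semimartingale whose drift is $\int_0^t(f(s,X_s)-f(s,Y_s))\,ds$ and whose martingale part collects the two stochastic integrals of the differences of $g$ and $k$ evaluated at $X$ and $Y$. Applying Theorem~\ref{theorem:ito type inequality} to $X-Y$, the term $2\int_0^t e^{2\alpha(t-s)}\langle X_{s-}-Y_{s-},f(s,X_s)-f(s,Y_s)\rangle\,ds$ is $\le 2M\int_0^t e^{2\alpha(t-s)}\|X_s-Y_s\|^2\,ds$ by Hypothesis~\ref{main_hypothesis}(a), the quadratic variation term has compensator $\le C\int_0^t e^{2\alpha(t-s)}\|X_s-Y_s\|^2\,ds$ by Hypothesis~\ref{main_hypothesis}(b), and the pairing with the martingale part of $Z$ is a local martingale. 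Localizing it to a true martingale, taking expectations, removing the localization by dominated convergence (using square integrability), and invoking Gronwall's lemma then gives $\mathbb{E}\|X_t-Y_t\|^2=0$ for all $t$, hence indistinguishability of the two c\`adl\`ag processes.

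For existence I would first use that, when $f$ is moreover globally Lipschitz in $x$ uniformly in $(t,\omega)$,~\eqref{main_equation} has a unique square integrable c\`adl\`ag mild solution, obtained by a Picard iteration in the space of adapted c\`adl\`ag processes with norm $(\mathbb{E}\sup_{t\le T}\|\cdot\|^2)^{1/2}$, the contraction on a short interval (then patched over $[0,T]$) following from Theorem~\ref{Theorem: Kotelenez inequality} applied to the stochastic convolution of the martingale part (cf.~\cite{Peszat-Zabczyk}). Since for each $(t,\omega)$ the operator $MI-f(t,\cdot,\omega)$ is everywhere defined, monotone and demicontinuous, hence maximal monotone, I set $f_n(t,x,\omega)=f(t,J_n^{t,\omega}x,\omega)+M(x-J_n^{t,\omega}x)$ with $J_n^{t,\omega}=(I+\tfrac1n(MI-f(t,\cdot,\omega)))^{-1}$; then each $f_n$ is globally Lipschitz in $x$, satisfies Hypothesis~\ref{main_hypothesis}(a) with the same $M$, inherits Hypothesis~\ref{main_hypothesis}(c) with an $n$-independent constant (using $\|x-J_n^{t,\omega}x\|\le\tfrac1n\|Mx-f(t,x,\omega)\|$), and $f_n(t,x,\omega)\to f(t,x,\omega)$, $J_n^{t,\omega}x\to x$ as $n\to\infty$. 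Let $X^n$ denote the mild solution of~\eqref{main_equation} with $f$ replaced by $f_n$.

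The argument then rests on three estimates. First, a uniform a priori bound $\sup_n\mathbb{E}\sup_{t\le T}\|X^n_t\|^2<\infty$: apply Theorem~\ref{theorem:ito type inequality} to $X^n$, bound $\langle X^n_{s-},f_n(s,X^n_s)\rangle\le M\|X^n_s\|^2+\langle X^n_{s-},f_n(s,0)\rangle$, and use Young's inequality, Hypothesis~\ref{main_hypothesis}(c), Theorem~\ref{Theorem: Kotelenez inequality} for the convolution term, and Gronwall. Second, a Cauchy estimate $\mathbb{E}\sup_{t\le T}\|X^n_t-X^m_t\|^2\to0$: apply Theorem~\ref{theorem:ito type inequality} to $X^n-X^m$, decompose $X^n_{s-}-X^m_{s-}=(J_n^sX^n_{s-}-J_m^sX^m_{s-})+(r^n_{s-}-r^m_{s-})$ with $r^n_s=X^n_s-J_n^sX^n_s$, control the pairing of the first piece with $f(s,J_n^sX^n_s)-f(s,J_m^sX^m_s)$ by semimonotonicity and the remainder by Cauchy--Schwarz and linear growth, note $\mathbb{E}\int_0^T\|r^n_s\|^2\,ds\to0$ (since $\|r^n_s\|\le\tfrac Cn(1+\|X^n_s\|)$ and the first estimate), bound the quadratic variation term by Hypothesis~\ref{main_hypothesis}(b) and the martingale pairing by Burkholder--Davis--Gundy plus Young (absorbing a $\tfrac12\mathbb{E}\sup_{t\le T}\|X^n_t-X^m_t\|^2$ term on the left), and apply Gronwall. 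Third, identification of the limit $X$ of $X^n$ (adapted, c\`adl\`ag along a subsequence, square integrable by Fatou): by Hypothesis~\ref{main_hypothesis}(b) the coefficients $g(s,X^n_{s-})$ and $k(s,\xi,X^n_{s-})$ converge in $L^2$, so the stochastic integrals converge in $L^2(\Omega;H)$; and since $J_n^sX^n_s\to X_s$, demicontinuity gives $f(s,J_n^sX^n_s)\rightharpoonup f(s,X_s)$ a.e., hence (with the a priori bound and, e.g., Mazur's lemma) $f_n(\cdot,X^n)\rightharpoonup f(\cdot,X)$ weakly in $L^2([0,T]\times\Omega;H)$, and applying the bounded linear operator $\phi\mapsto\int_0^t S_{t-s}\phi(s)\,ds$ shows the drift terms converge weakly in $L^2(\Omega;H)$ to $\int_0^t S_{t-s}f(s,X_s)\,ds$; passing to the limit in the mild equation for $X^n$ yields~\eqref{mild_solution} for $X$.

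The main obstacle is the interplay, in the second and third estimates, between the lack of Lipschitz continuity of $f$ and the need to pass to the limit in the drift: because only $\langle f(x)-f(y),x-y\rangle\le M\|x-y\|^2$ is available, no moment inequality controls $f(s,X^n_s)-f(s,X^m_s)$, so one is forced to use the pathwise inequality of Theorem~\ref{theorem:ito type inequality} to extract cancellation from the monotone structure; and identifying the limit of the drift requires upgrading the pointwise weak convergence furnished by demicontinuity to weak $L^2$-convergence and commuting it with the deterministic convolution, all while keeping the Yosida remainders $r^n$ under control through the linear growth hypothesis and checking that every constant remains finite on each fixed interval $[0,T]$.
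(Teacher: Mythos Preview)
The paper does not actually prove this theorem: immediately after the statement it reads ``For the proof see~\cite{Archive-existence}'', so there is no in-paper argument to compare your proposal against. What can be said is that the toolkit the paper assembles---Kato's Theorem~\ref{Theorem: Kato} on deterministic semilinear equations with demicontinuous monotone right-hand side, the measurability result~\cite{Zangeneh-measurability}, and the It\^o type inequality of Theorem~\ref{theorem:ito type inequality}---strongly suggests that the intended proof in~\cite{Archive-existence} proceeds by a \emph{fixed-point} argument: for a given adapted process $Y$ one solves, $\omega$ by $\omega$, the deterministic integral equation
\[
X_t=S_tX_0+\int_0^t S_{t-s}f(s,X_s)\,ds+V_t(Y),
\]
where $V_t(Y)$ is the stochastic convolution of $g(\cdot,Y_-)\,dW+\int_E k(\cdot,\xi,Y_-)\,\tilde N$, via Theorem~\ref{Theorem: Kato}; one then checks adaptedness using~\cite{Zangeneh-measurability} and shows that $Y\mapsto X$ is a contraction by means of Theorem~\ref{theorem:ito type inequality} and the Lipschitz assumption on $g,k$.

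Your route---regularize $f$ by a Yosida-type approximation $f_n$, solve the Lipschitz problem, and pass to the limit---is a genuinely different strategy, closer in spirit to~\cite{Marinelli-Rockner-wellposedness} and to the paper's own Section~\ref{section: Yosida Approximation} (which, however, is placed \emph{after} existence and treats Yosida approximation as a corollary rather than as the engine of existence). Your uniqueness argument is essentially the paper's Theorem~\ref{theorem: continuity I} specialized to $f_0=f_1$, $g_0=g_1$, $k_0=k_1$, and is fine. For existence, the three estimates you outline are the right ones, and the algebra of the Cauchy step (splitting $X^n-X^m$ through the resolvents $J_n,J_m$ so that semimonotonicity can be used on $f(\cdot,J_nX^n)-f(\cdot,J_mX^m)$) is correct.

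One point deserves more care than you give it: the joint measurability and adaptedness of $(t,\omega)\mapsto J_n^{t,\omega}x$ and hence of $f_n(t,x,\omega)$. The resolvent $(I+\tfrac1n(MI-f(t,\cdot,\omega)))^{-1}$ is defined only implicitly, and Hypothesis~\ref{main_hypothesis}(a) gives measurability of $f$ in $(t,\omega)$ but says nothing directly about the measurability of the resolvent. This is exactly the kind of issue the paper side-steps by citing~\cite{Zangeneh-measurability}, and in your scheme it must be checked before the Lipschitz existence theorem can even be invoked for~$f_n$. A second, smaller point: you pass from pointwise weak convergence $f(s,J_n^sX^n_s)\rightharpoonup f(s,X_s)$ to weak $L^2([0,T]\times\Omega;H)$ convergence via ``Mazur's lemma''; what is really needed is the uniform $L^2$ bound (which you have) together with identification of the weak limit via the pointwise limit, and Mazur alone does not do this---a subsequence argument plus uniqueness of weak limits is cleaner.
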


\section{The Main Result}\label{section: Continuity With Respect to Parameter}

\begin{theorem} [Continuity With Respect to Parameter I]\label{theorem: continuity I}
    Assume that for $n=0,1$, $f_n(t,x,\omega),g_n(t,x,\omega)$ and $k_n(t,\xi,x,\omega)$ satisfy Hypothesis~\ref{main_hypothesis} with the same constants. Let $X^n_t$ be the unique mild solution of
    \begin{multline*}
        dX^n_t= A X^n_t dt+f_n(t,X^n_t) dt + g_n(t,X^n_{t-})d W_t
        + \int_E k_n(t,\xi,X^n_{t-}) \tilde{N}(dt,d\xi),
    \end{multline*}
    with initial condition $X^n_0$. Then,
    \begin{multline}\label{equation: continuity}
        \mathbb{E} \sup\limits_{0\le t\le T} e^{-2\alpha t} \| X^1_t-X^0_t \| ^2 \le 2 e^{C_1T} \mathbb{E}\|X^1_0-X^0_0\|^2 \\
        + 2 e^{C_1 T} \int_0^T e^{-2\alpha t} \mathbb{E}\|f_1(t,X^0_t)-f_0(t,X^0_t)\|^2 dt\\
        + C_2 e^{C_1 T}\int_0^T e^{-2\alpha t} {\mathbb{E}\|(g_1(t,X^0_t)-g_0(t,X^0_t))\|^2 dt}\\
        + C_2 e^{C_1 T}\int_0^T \int_E{ e^{-2\alpha t} \mathbb{E}\|(k_1(t,\xi,X^0_t)-k_0(t,\xi,X^0_t))\|^2 \nu(d\xi) dt},
    \end{multline}
    for $C_1=4 M + 2 + C (8\mathcal{C}_1^2+4)$ and $C_2=8\mathcal{C}_1^2+4$ where $\mathcal{C}_1$ is the constant in Burkholder-Davies-Gundy inequality.
\end{theorem}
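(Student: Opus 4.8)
The plan is to work with the difference process $Y_t := X^1_t - X^0_t$. Subtracting the two mild equations shows that $Y_t$ is itself a stochastic convolution, $Y_t = S_t(X^1_0-X^0_0) + \int_0^t S_{t-s}\,dZ_s$, driven by the càdlàg locally square integrable semimartingale
\begin{multline*}
Z_t = \int_0^t\bigl(f_1(s,X^1_s)-f_0(s,X^0_s)\bigr)\,ds + \int_0^t\bigl(g_1(s,X^1_{s-})-g_0(s,X^0_{s-})\bigr)\,dW_s \\ + \int_0^t\!\!\int_E\bigl(k_1(s,\xi,X^1_{s-})-k_0(s,\xi,X^0_{s-})\bigr)\,\tilde N(ds,d\xi).
\end{multline*}
Applying the It\^o type inequality (Theorem~\ref{theorem:ito type inequality}) to $Y$, multiplying through by $e^{-2\alpha t}$ (so that the kernel $e^{2\alpha(t-s)}$ collapses to $e^{-2\alpha s}$), and writing $U_t := e^{-2\alpha t}\|Y_t\|^2$, one gets, a.s. for all $t\in[0,T]$,
\[
U_t \le \|X^1_0-X^0_0\|^2 + 2\int_0^t e^{-2\alpha s}\langle Y_{s-},\,dZ_s\rangle + \int_0^t e^{-2\alpha s}\,d[Z]_s .
\]

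Next I would decompose the right-hand side into a drift term, a martingale $N_t$, and the quadratic-variation term. For the drift, split $f_1(s,X^1_s)-f_0(s,X^0_s)=\bigl(f_1(s,X^1_s)-f_1(s,X^0_s)\bigr)+\bigl(f_1(s,X^0_s)-f_0(s,X^0_s)\bigr)$; Hypothesis~\ref{main_hypothesis}(a) bounds the pairing of $Y_s$ with the first bracket by $M\|Y_s\|^2$, and Cauchy--Schwarz together with $2ab\le a^2+b^2$ handles the second, so the drift contributes at most $(2M+1)\int_0^t U_s\,ds + \int_0^t e^{-2\alpha s}\|f_1(s,X^0_s)-f_0(s,X^0_s)\|^2\,ds$. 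Since the finite-variation part of $Z$ does not contribute to $[Z]$, the quadratic-variation term equals the nondecreasing process $R_t:=\int_0^t e^{-2\alpha s}\|g_1(s,X^1_{s-})-g_0(s,X^0_{s-})\|_{L_{HS}}^2\,ds + \int_0^t\!\!\int_E e^{-2\alpha s}\|k_1(s,\xi,X^1_{s-})-k_0(s,\xi,X^0_{s-})\|^2\,N(ds,d\xi)$, whose supremum over $[0,T]$ is simply $R_T$. Finally $N_t$, the martingale built from the Wiener and compensated-Poisson pieces of $2\int_0^t e^{-2\alpha s}\langle Y_{s-},dZ_s\rangle$, has quadratic variation $[N]_T \le 4\bigl(\sup_{0\le t\le T}U_t\bigr)R_T$, using $e^{-4\alpha s}\|Y_{s-}\|^2 \le e^{-2\alpha s}\sup_{t\le T}U_t$ together with $\|\Phi^* y\|^2\le\|\Phi\|_{L_{HS}}^2\|y\|^2$ and $\langle y,u\rangle^2\le\|y\|^2\|u\|^2$.

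Now take $\sup_{0\le t\le T}$ and then expectations. The martingale term is controlled by Burkholder--Davis--Gundy and Young's inequality: $\mathbb{E}\sup_{t\le T}|N_t| \le \mathcal{C}_1\mathbb{E}[N]_T^{1/2} \le 2\mathcal{C}_1\,\mathbb{E}\bigl[(\sup_{t\le T}U_t)^{1/2}R_T^{1/2}\bigr] \le \mathcal{C}_1\bigl(\epsilon\,\mathbb{E}\sup_{t\le T}U_t + \epsilon^{-1}\mathbb{E}R_T\bigr)$, and I would choose $\epsilon = 1/(2\mathcal{C}_1)$ so that the first summand becomes $\tfrac12\mathbb{E}\sup_{t\le T}U_t$ and is absorbed into the left-hand side. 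For $\mathbb{E}R_T$ one replaces $N(ds,d\xi)$ by its compensator $\nu(d\xi)\,ds$, then splits $g_1(s,X^1)-g_0(s,X^0)$ and $k_1(s,\xi,X^1)-k_0(s,\xi,X^0)$ via $\|a+b\|^2\le2\|a\|^2+2\|b\|^2$ and uses Hypothesis~\ref{main_hypothesis}(b) to bound the ``$x$-difference'' pieces by $C\|Y_{s-}\|^2 = Ce^{2\alpha s}U_{s-}$; this gives $\mathbb{E}R_T \le 2C\int_0^T\mathbb{E}U_s\,ds + 2G(T)+2K(T)$, where $G(T),K(T)$ are the $g$- and $k$-integrals appearing in~\eqref{equation: continuity}. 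Collecting all terms and using $\mathbb{E}U_s \le \mathbb{E}\sup_{0\le r\le s}U_r$ yields
\[
\mathbb{E}\sup_{0\le t\le T}U_t \;\le\; 2\,\mathbb{E}\|X^1_0-X^0_0\|^2 + 2F(T) + C_2\,G(T) + C_2\,K(T) + C_1\!\int_0^T \mathbb{E}\sup_{0\le r\le s}U_r\,ds,
\]
with $C_1 = 4M+2+C(8\mathcal{C}_1^2+4)$ and $C_2 = 8\mathcal{C}_1^2+4$ exactly as claimed (here $F(T)=\int_0^T e^{-2\alpha t}\mathbb{E}\|f_1(t,X^0_t)-f_0(t,X^0_t)\|^2\,dt$ and $G,K$ analogously). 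Since the data term is nondecreasing in $T$, Gronwall's inequality gives~\eqref{equation: continuity}.

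The step I expect to require the most care is the simultaneous interchange of supremum, expectation, and the BDG estimate: to be rigorous one should first run the whole estimate with $T$ replaced by $T\wedge\tau_n$ for a localizing sequence $\tau_n$ making $N$ a genuine square integrable martingale and keeping $\mathbb{E}\sup U$ finite, and then pass $n\to\infty$ by monotone convergence. The other delicate point is purely bookkeeping: one must apply BDG a single time to the \emph{combined} Wiener-plus-jump martingale $N_t$ rather than separately to its two components, since it is precisely this that produces the stated constants $C_1,C_2$ rather than larger ones.
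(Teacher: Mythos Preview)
Your proposal is correct and follows essentially the same route as the paper: write the difference $Y_t=X^1_t-X^0_t$ as a stochastic convolution, apply the It\^o type inequality (Theorem~\ref{theorem:ito type inequality}), split the drift via monotonicity plus Cauchy--Schwarz, control the martingale part by a single BDG estimate combined with Young's inequality (absorbing $\tfrac12\mathbb{E}\sup U_t$ into the left), bound $\mathbb{E}[M]_T$ using the Lipschitz hypothesis on $g$ and $k$, and finish with Gronwall; your bookkeeping reproduces exactly the constants $C_1=4M+2+C(8\mathcal{C}_1^2+4)$ and $C_2=8\mathcal{C}_1^2+4$. The only presentational difference is that the paper first proves the case $\alpha=0$ and then reduces the general case to it via the substitution $\tilde S_t=e^{-\alpha t}S_t$, $\tilde X_t=e^{-\alpha t}X_t$, whereas you carry the factor $e^{-2\alpha t}$ through the estimate directly by working with $U_t=e^{-2\alpha t}\|Y_t\|^2$; these are equivalent, and your explicit localization remark is a welcome technical addition that the paper leaves implicit.
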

\begin{proof}
    First we consider the case that $\alpha=0$. Subtract $X^1$ and $X^0$,
    \begin{multline*}
        X^1_t-X^0_t=S_t (X^1_0-X^0_0)\\
        + \int_0^t S_{t-s} (f_1(s,X^1_s)-f_0(s,X^0_s))ds
        + \int_0^t S_{t-s} dM_s,
    \end{multline*}
    where
    \[ M_t=\int_0^t (g_1(s,X^1_{s-})-g_0(s,X^0_{s-}))dW_s+\int_E (k_1(s,\xi,X^1_{s-})-k_0(s,\xi,X^0_{s-}))d\tilde{N}. \]
    Applying It\^o type inequality (Theorem~\ref{theorem:ito type inequality}), for $\alpha=0$, to $X^1-X^0$ we find
    \begin{multline}\label{equation:proof of continuity 1}
        \| X^1_t-X^0_t \| ^2 \le \| X^1_0-X^0_0 \| ^2 +  2 \underbrace {\int_0^t {\langle X^1_{s-}-X^0_{s-} , (f_1(s,X^1_s)-f_0(s,X^0_s))\rangle ds}}_\bold{A_t}\\
        + 2 \underbrace {\int_0^t {\langle X^1_{s-}-X^0_{s-} , d M_s \rangle}}_\bold{B_t}+ [M]_t.
    \end{multline}
    We have
    \begin{multline*}
        \bold{A_t} =  \int_0^t {\langle X^1_{s-} - X^0_{s-} , f_1(s,X^1_s)-f_1(s,X^0_s) \rangle ds}\\
        + \int_0^t \langle X^1_{s-}-X^0_{s-} , f_1(s,X^0_s)-f_0(s,X^0_s) \rangle ds.
    \end{multline*}
    Using the monotonicity assumption and Cauchy-Schwartz inequality we have
    \begin{multline}\label{equation:proof of continuity 3}
        \bold{A_t} \le M \int_0^t \|X^1_{s}-X^0_{s}\|^2 ds + \frac{1}{2}\int_0^t \|X^1_{s}-X^0_{s}\|^2 ds\\
        + \frac{1}{2} \int_0^t \|f_1(s,X^0_{s})-f_0(s,X^0_{s})\|^2 ds.
    \end{multline}
    Applying Burkholder-Davies-Gundy inequality for $p=1$ to term $\mathbf{B_t}$ we find
        \[ \mathbb{E}\sup\limits_{0\le s\le t}\mathbf{|B_s|} \le \mathcal{C}_1\mathbb{E}\left(\sup\limits_{0\le s\le t} \|X^1_s-X^0_s\|[M]_t^\frac{1}{2}\right),\]
    and by Cauchy-Schwartz inequality,
    \begin{equation}\label{equation:proof of continuity 4}
        \le \frac{1}{4} \mathbb{E}\sup\limits_{0\le s\le t} \|X^1_s-X^0_s\|^2 + \mathcal{C}_1^2 \mathbb{E}[M]_t.
    \end{equation}
    We have
    \begin{eqnarray*}
        \mathbb{E}[M]_t &=& \int_0^t{\mathbb{E}\|(g_1(s,X^1_s)-g_0(s,X^0_s))\|^2 ds}\\
        &&+ \int_0^t\int_E{\mathbb{E}\|(k_1(s,\xi,X^1_s)-k_0(s,\xi,X^0_s))\|^2 \nu(d\xi) ds}\\
        & \le & 2 \int_0^t{\mathbb{E}\|(g_1(s,X^1_s)-g_1(s,X^0_s))\|^2 ds}\\
        && + 2\int_0^t{\mathbb{E}\|(g_1(s,X^0_s)-g_0(s,X^0_s))\|^2 ds} \\
        && + 2\int_0^t\int_E{\mathbb{E}\|(k_1(s,\xi,X^1_s)-k_1(s,\xi,X^0_s))\|^2 \nu(d\xi) ds}\\
        && +2\int_0^t\int_E{\mathbb{E}\|(k_1(s,\xi,X^0_s)-k_0(s,\xi,X^0_s))\|^2 \nu(d\xi) ds}.
    \end{eqnarray*}
    Using the Lipschitz assumption on $g$ and $k$ we find
    \begin{multline}\label{equation:proof of continuity 5}
        \mathbb{E}[M]_t \le 2C \int_0^t \mathbb{E}\|X^1_s-X^0_s\|^2 ds\\
        + 2\int_0^t{\mathbb{E} \|(g_1(s,X^0_s)-g_0(s,X^0_s))\|^2 ds}\\
        + 2\int_0^t\int_E{\mathbb{E} \|(k_1(s,\xi,X^0_s)-k_0(s,\xi,X^0_s))\|^2 \nu(d\xi) ds}.
    \end{multline}
    Substituting \eqref{equation:proof of continuity 3}, \eqref{equation:proof of continuity 4} and \eqref{equation:proof of continuity 5} in \eqref{equation:proof of continuity 1}, after cancellation we find
    \begin{eqnarray*}
        \mathbb{E} \sup\limits_{0\le s\le t} \| X^1_s-X^0_s \| ^2 &\le&  C_1 \int_0^t \mathbb{E}\|X^1_s-X^0_s\|^2 ds + 2 \mathbb{E}\|X^1_0-X^0_0\|^2  \\
        && + 2 \int_0^t \mathbb{E}\|f_1(s,X^0_s)-f_0(s,X^0_s)\|^2 ds\\
        && + C_2\int_0^t{\mathbb{E}\|(g_1(s,X^0_s)-g_0(s,X^0_s))\|^2 ds}\\
        && + C_2\int_0^t\int_E{\mathbb{E}\|(k_1(s,\xi,X^0_s)-k_0(s,\xi,X^0_s))\|^2 \nu(d\xi) ds},
    \end{eqnarray*}
    where $C_1=4 M + 2 + C(8\mathcal{C}_1^2+4)$ and $C_2=8\mathcal{C}_1^2+4$.

    Now applying Gronwall's inequality the statement follows. 	Hence the proof for the case $\alpha=0$ is complete. Now for the general case, we apply the following change of variables,
	        \begin{gather*}
           \tilde{S}_t= e^{-\alpha t} S_t ,\qquad \tilde{f}(t,x,\omega)=e^{-\alpha t}f(t,e^{\alpha t}x,\omega) ,\qquad \tilde{g}(t,x,\omega)=e^{-\alpha t}g(t,e^{\alpha t}x,\omega), \\
           \tilde{k}(t,\xi,x,\omega)=e^{-\alpha t}k(t,\xi,e^{\alpha t}x,\omega).
        \end{gather*}
        Note that $\tilde{S}_t$ is a contraction semigroup. It is easy to see that $X_t$ is a mild solution of equation~\eqref{main_equation} if and only if $\tilde{X}_t=e^{-\alpha t} X_t$ is a mild solution of equation with coefficients $\tilde{S},\tilde{f},\tilde{g},\tilde{k}$.
        
\end{proof}

As a consequence of Theorem~\ref{theorem: continuity I} we prove that if the coefficients and initial conditions of a sequence of equations converge, then their mild solutions also converge to the mild solution of the limiting equation. The convergence that we prove is in a stronger sense than similar result in~\cite{Albeverio-Mandrekar-Rudiger-2009}.

\begin{corollary}[Continuity With Respect to Parameter II]\label{corollary: continuity II}
    Assume that for $n=0,1,2,\ldots$, $f_n$, $g_n$, $k_n$ and $X^n_0$ satisfy Hypothesis~\ref{main_hypothesis} with same constants and assume that for every $t \in [0,T]$ and $x\in H$ we have almost surely
    \begin{eqnarray*}
        &f_n(t,x,\omega)\to f_0(t,x,\omega)&\\
        &g_n(t,x,\omega)\to g_0(t,x,\omega)&\\
        &\int_E{\|k_n(t,\xi,x,\omega)-k_0(t,\xi,x,\omega)\|^2 \nu(d\xi)} \to 0&\\
        &\mathbb{E}\|X^n_0-X^0_0\|^2 \to 0.&
    \end{eqnarray*}
    Then
    \begin{equation*}
        \mathbb{E} \sup\limits_{0\le t\le T} \|X^n_t-X^0_t\|^2  \to 0.
    \end{equation*}
\end{corollary}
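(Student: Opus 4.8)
The plan is to derive Corollary~\ref{corollary: continuity II} directly from the quantitative estimate~\eqref{equation: continuity} of Theorem~\ref{theorem: continuity I}, applied to the pair of equations indexed by $n$ and $0$. Since all of $f_n,g_n,k_n,X_0^n$ satisfy Hypothesis~\ref{main_hypothesis} with the same constants, the constants $C_1,C_2$ in~\eqref{equation: continuity} do not depend on $n$, and the factor $e^{C_1 T}$ is a fixed number. Thus it suffices to show that each of the four terms on the right-hand side of~\eqref{equation: continuity}, with $X^1$ replaced by $X^n$ and $f_1,g_1,k_1$ by $f_n,g_n,k_n$, tends to $0$ as $n\to\infty$. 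Note first that $\mathbb{E}\sup_{0\le t\le T}\|X^n_t-X^0_t\|^2$ and $\mathbb{E}\sup_{0\le t\le T}e^{-2\alpha t}\|X^n_t-X^0_t\|^2$ differ only by the bounded factor $e^{2\alpha T}$ (if $\alpha\ge 0$) or are comparable in either direction, so controlling the weighted quantity controls the unweighted one.

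The term $2e^{C_1 T}\mathbb{E}\|X^n_0-X^0_0\|^2$ goes to $0$ by the assumed convergence of initial conditions. For the remaining three terms, the idea is to apply the dominated convergence theorem on the measure space $[0,T]\times\Omega$ (respectively $[0,T]\times E\times\Omega$) equipped with $dt\,d\mathbb{P}$ (resp. $dt\,\nu(d\xi)\,d\mathbb{P}$). The pointwise (in $t$, a.s.) convergence of the integrands is exactly what is hypothesized: $f_n(t,X^0_t)\to f_0(t,X^0_t)$ and $g_n(t,X^0_t)\to g_0(t,X^0_t)$ follow from the assumed a.s. convergence of $f_n(t,x,\cdot)\to f_0(t,x,\cdot)$ and $g_n(t,x,\cdot)\to g_0(t,x,\cdot)$ for fixed $x$, specialized at the random point $x=X^0_t(\omega)$; similarly $\int_E\|k_n(t,\xi,X^0_t)-k_0(t,\xi,X^0_t)\|^2\nu(d\xi)\to 0$. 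Hence the integrands $\|f_n(t,X^0_t)-f_0(t,X^0_t)\|^2$ etc. tend to $0$ pointwise, and after multiplication by the bounded weight $e^{-2\alpha t}$ they still do.

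For the domination, I would use Hypothesis~\ref{main_hypothesis}(c) uniformly in $n$: $\|f_n(t,X^0_t)\|^2\le D(1+\|X^0_t\|^2)$, so $\|f_n(t,X^0_t)-f_0(t,X^0_t)\|^2\le 2\|f_n(t,X^0_t)\|^2+2\|f_0(t,X^0_t)\|^2\le 4D(1+\|X^0_t\|^2)$, which is an integrable dominating function on $[0,T]\times\Omega$ because $X^0$ is a square-integrable càdlàg process (by Theorem~\ref{theorem:existence and uniqueness}), hence $\int_0^T\mathbb{E}\|X^0_t\|^2\,dt<\infty$. The same bound works verbatim for the $g$-term using the $\|g_n\|^2_{L_{HS}}$ part of (c), and for the $k$-term using $\int_E\|k_n(t,\xi,X^0_t)\|^2\nu(d\xi)\le D(1+\|X^0_t\|^2)$, which dominates $\int_E\|k_n(t,\xi,X^0_t)-k_0(t,\xi,X^0_t)\|^2\nu(d\xi)$ up to the factor $4D(1+\|X^0_t\|^2)$, again integrable in $(t,\omega)$. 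Dominated convergence then forces all three integrals to $0$, and combining the four limits with~\eqref{equation: continuity} gives $\mathbb{E}\sup_{0\le t\le T}e^{-2\alpha t}\|X^n_t-X^0_t\|^2\to 0$, whence $\mathbb{E}\sup_{0\le t\le T}\|X^n_t-X^0_t\|^2\to 0$ by absorbing the bounded factor $e^{2\alpha T}$.

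The only mild subtlety — and the step I would be most careful about — is the measurability and the correct specialization of the "for every $x\in H$, a.s." hypotheses at the random argument $X^0_t(\omega)$: one must argue that the a.s. convergence $f_n(t,x)\to f_0(t,x)$ holding for each fixed $x$, together with some regularity in $x$ (demicontinuity from Hypothesis~\ref{main_hypothesis}(a), or working with a suitable product-measurable version and a separability argument), yields a.s. convergence along $x=X^0_t$. In practice this is routine given the demicontinuity and separability of $H$, but it is the one place where the proof needs a sentence of justification rather than a direct appeal to the hypotheses. Everything else is bookkeeping: plug into~\eqref{equation: continuity}, check domination via (c), apply dominated convergence, and clear the exponential weight.
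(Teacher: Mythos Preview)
Your proposal is correct and follows essentially the same approach as the paper: apply Theorem~\ref{theorem: continuity I} to the pair $(X^n,X^0)$, note that the constants are uniform in $n$, and kill the three integral terms on the right of~\eqref{equation: continuity} by dominated convergence using Hypothesis~\ref{main_hypothesis}(c) to produce the dominating function $C(1+\|X^0_t\|^2)$. The paper's proof is in fact terser than yours and does not discuss the measurability subtlety you flag about specializing the ``for every $x$, a.s.'' hypothesis at $x=X^0_t(\omega)$; your identification of that point as the only place requiring care is accurate.
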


\begin{proof}
    Apply Theorem~\ref{theorem: continuity I} to $X^n$ and $X^0$. Note that by Hypothesis~\ref{main_hypothesis}-(c) the integrands on the right hand side of~\eqref{equation: continuity} are dominated by a constant multiple of $(1+\|X^0_t(\omega)\|^2)$, on the other hand by assumptions they tend to zero almost everywhere on $[0,T]\times\Omega$. Hence by dominated convergence theorem, the right hand side of~\eqref{equation: continuity} tends to $0$ and therefore
        \[ \mathbb{E} \sup\limits_{0\le t\le T} e^{-2\alpha t} \| X^1_t-X^0_t \| ^2 \to 0 \]
    which implies the statement.
\end{proof}

As another consequence of Theorem~\ref{theorem: continuity I} it follows that if the contraction coefficient of the semigroup is sufficiently negative, then all the mild solutions are exponentially stable.

\begin{corollary} [Exponential Stability]\label{corollary: Exponential Stability}
    Let $X_t$ and $Y_t$ be mild solutions of~\eqref{main_equation} with initial conditions $X_0$ and $Y_0$. Then
    \begin{eqnarray*}
        \mathbb{E} \| X_t-Y_t \| ^2 &\le& 2 e^{\gamma t} \mathbb{E}\|X_0-Y_0\|^2
    \end{eqnarray*}
    for $\gamma= 2\alpha + 4 M + 2 + C(8\mathcal{C}_1^2+4)$. In particular, if $\gamma < 0$ then all mild solutions are exponentially stable.
\end{corollary}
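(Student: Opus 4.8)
The plan is to derive Corollary~\ref{corollary: Exponential Stability} directly from Theorem~\ref{theorem: continuity I} by specializing to the situation where the two equations have identical coefficients. First I would set $f_1 = f_0 = f$, $g_1 = g_0 = g$, $k_1 = k_0 = k$, $X^1_0 = X_0$ and $X^0_0 = Y_0$ in the statement of Theorem~\ref{theorem: continuity I}. With this choice the three integral terms on the right-hand side of~\eqref{equation: continuity} vanish identically, since each integrand contains a difference of the form $f_1 - f_0$, $g_1 - g_0$, or $k_1 - k_0$, all of which are zero. What remains is
\[
    \mathbb{E} \sup\limits_{0\le t\le T} e^{-2\alpha t} \| X_t-Y_t \| ^2 \le 2 e^{C_1 T} \mathbb{E}\|X_0-Y_0\|^2,
\]
with $C_1 = 4M + 2 + C(8\mathcal{C}_1^2 + 4)$.

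Next I would convert this supremum bound into a pointwise-in-time bound and reintroduce the exponential factor. Fixing any $t \ge 0$ and taking $T = t$, I would drop the supremum (bounding $e^{-2\alpha t}\|X_t - Y_t\|^2$ by the supremum over $[0,t]$) to get
\[
    e^{-2\alpha t}\, \mathbb{E}\|X_t - Y_t\|^2 \le 2 e^{C_1 t} \mathbb{E}\|X_0 - Y_0\|^2,
\]
and then multiply through by $e^{2\alpha t}$ to obtain $\mathbb{E}\|X_t - Y_t\|^2 \le 2 e^{(2\alpha + C_1)t}\,\mathbb{E}\|X_0 - Y_0\|^2 = 2 e^{\gamma t}\,\mathbb{E}\|X_0 - Y_0\|^2$ with $\gamma = 2\alpha + 4M + 2 + C(8\mathcal{C}_1^2 + 4)$, which is exactly the claimed constant. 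The final assertion about exponential stability is then immediate: if $\gamma < 0$, then $e^{\gamma t} \to 0$ as $t \to \infty$, so $\mathbb{E}\|X_t - Y_t\|^2 \to 0$ exponentially fast, which is the definition of exponential stability (in mean square) of the solutions.

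There is essentially no obstacle here; the corollary is a routine specialization of the main theorem. The only minor points of care are making sure that the hypotheses of Theorem~\ref{theorem: continuity I} are genuinely satisfied when both equations coincide — they trivially are, since $f_0, g_0, k_0$ already satisfy Hypothesis~\ref{main_hypothesis} by assumption, and taking $f_1 = f_0$ etc.\ means "the same constants" holds automatically — and correctly tracking the interplay between the $e^{-2\alpha t}$ weight inside the expectation and the exponential rescaling, so that the stated value of $\gamma$ emerges with the correct sign on the $2\alpha$ term. I would also remark that the same argument in fact yields the stronger pathwise-supremum statement $\mathbb{E}\sup_{0 \le s \le t} e^{-2\alpha s}\|X_s - Y_s\|^2 \le 2 e^{C_1 t}\mathbb{E}\|X_0 - Y_0\|^2$, from which one can extract almost-sure asymptotic behavior as well, though the corollary as stated only records the weaker pointwise consequence.
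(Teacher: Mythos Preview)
Your proposal is correct and matches the paper's intended approach: the corollary is stated without proof precisely because it is the immediate specialization of Theorem~\ref{theorem: continuity I} to the case $f_1=f_0$, $g_1=g_0$, $k_1=k_0$, which is exactly what you carry out. Your tracking of the constants, in particular the emergence of $\gamma=2\alpha+C_1$, is correct.
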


\section{Yosida Approximations}\label{section: Yosida Approximation}

As another application of Theorem~\ref{theorem: continuity I} we construct Lipschitz approximations of equation~\eqref{main_equation} known as Yosida approximations and prove that their solutions converge to the solution of~\eqref{main_equation}. Since equations with Lipschitz coefficients can be solved numerically, this can be used as a scheme for numerical solution of~\eqref{main_equation}.

In this section we assume $f:H\to H$ satisfies a condition stronger than monotonicity which is maximal monotonicity. This concept in its most generality is defined for subsets of $X\times X^*$ which $X$ is a Banach space. For a detailed treatment of this concept see~\cite{Barbu}.

\begin{definition}
$A\subset X\times X^*$ is called monotone, if for any $(x_1,y_1),(x_2,y_2)\in A$,
\[ (y_2-y_1,x_2-x_1) \le 0 \]
and is called \emph{maximal} monotone if it is monotone and is not properly contained in any monotone set.
\end{definition}

Note that any operator $f:X\to X^*$ can be viewed as a subset of $X\times X^*$ and hence the concept of maximal monotonicity is defined for it, especially since $H$ is a Hilbert space, maximal monotonicity makes sense for operators $f:H\to H$.

Maximal monotonicity is not very restrictive as the following theorem shows that any monotone operator with a weak continuity assumption called \emph{hemicontinuity} is maximal monotone.

\begin{definition}
$f:H\to H$ is called hemicontinuous if for any $x,y\in H$, $f(x+ty)$ is continuous as a function of $t\in\mathbb{R}$ (in other words $f$ is continuous in each direction).
\end{definition}

\begin{theorem}[\cite{Barbu}, page 45, Theorem 1.3]\label{theorem: hemicontinuous}
Let $f:H\to H$ be monotone and hemicontinuous, then it is maximal monotone.
\end{theorem}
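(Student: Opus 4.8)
The plan is to use the classical device of Minty. Since $f$ is single-valued and defined on all of $H$, its graph $G(f)=\{(x,f(x)):x\in H\}$ is a monotone subset of $H\times H$ in the paper's sign convention. To prove maximality it suffices to show: if $(x_0,y_0)\in H\times H$ is such that $G(f)\cup\{(x_0,y_0)\}$ is still monotone, then $(x_0,y_0)\in G(f)$. Spelled out, the hypothesis on $(x_0,y_0)$ is that
\[
\langle y_0-f(x),\,x_0-x\rangle\le 0\qquad\text{for every }x\in H,
\]
and the goal is to deduce $y_0=f(x_0)$.

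First I would insert test points along rays through $x_0$. Fix $z\in H$ and, for $t>0$, apply the inequality above with $x=x_0-tz$. This yields $\langle y_0-f(x_0-tz),\,tz\rangle\le 0$; dividing by $t>0$ gives
\[
\langle y_0-f(x_0-tz),\,z\rangle\le 0 .
\]
Now let $t\downarrow 0$. Hemicontinuity of $f$ means precisely that $t\mapsto f(x_0-tz)$ is continuous at $t=0$, so $\langle f(x_0-tz),z\rangle\to\langle f(x_0),z\rangle$, and the displayed inequality passes to the limit, giving $\langle y_0-f(x_0),\,z\rangle\le 0$.

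Finally, since $z\in H$ was arbitrary I would replace $z$ by $-z$ to obtain also $\langle y_0-f(x_0),\,z\rangle\ge 0$; hence $\langle y_0-f(x_0),\,z\rangle=0$ for all $z\in H$, which forces $y_0=f(x_0)$ and completes the argument.

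I do not expect a genuine obstacle here. The only points requiring mild care are the sign bookkeeping (the paper's ``monotone'' is what is often called dissipative, which reverses the usual inequalities) and the justification of the limit as $t\downarrow 0$: in fact one only needs weak continuity of $f$ along the segment joining $x_0-tz$ to $x_0$ in order to pass to the limit in the scalar product $\langle f(x_0-tz),z\rangle$, so the full strength of hemicontinuity is more than enough.
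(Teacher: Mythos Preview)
Your argument is the classical Minty lemma and is correct as written; the sign bookkeeping matches the paper's dissipative convention, and the passage to the limit via hemicontinuity is exactly what is needed. Note, however, that the paper does not supply a proof of this statement at all: it is quoted verbatim as a reference result from Barbu's monograph (page~45, Theorem~1.3), so there is nothing in the paper to compare your approach against. Your proof is in fact the standard one found in Barbu and elsewhere.
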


Let $f:H\to H$ be a maximal monotone operator. Then we follow~\cite{Barbu} to define for $\lambda>0$,
\[ I_\lambda = (I-\lambda f)^{-1} \]
\[ f_\lambda = \lambda^{-1} (I_\lambda-I) \]
Note that with our notation, $-f$ is maximal monotone in the sense of~\cite{Barbu}. Some of the important properties of $f_\lambda$ are listed in the following proposition.

\begin{proposition}[\cite{Barbu}, page 49, Proposition 1.3]\label{proposition: Yosida Properties}
Let $f:H\to H$ be maximal monotone. Then
\begin{description}
	\item[(i)] $f_\lambda$ is monotone and Lipschitz on $H$.
	\item[(ii)] For any $x\in H$, $\|f_\lambda (x)\| \le \|f(x)\|$.
	\item[(iii)] For any $x\in H$, $\lim_{\lambda\to 0} f_\lambda (x) = f(x)$ strongly in $H$.
\end{description}
\end{proposition}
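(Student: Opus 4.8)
The plan is to derive all three items from the single inequality $\langle f(x)-f(y),x-y\rangle\le 0$ together with the two structural facts that genuinely require \emph{maximal} (and not merely) monotonicity: Minty's surjectivity theorem, which guarantees that $I-\lambda f$ maps $H$ onto $H$ for every $\lambda>0$ so that $I_\lambda=(I-\lambda f)^{-1}$ is a map defined on all of $H$, and the closedness of the graph of a maximal monotone operator in the strong$\times$weak topology of $H\times H$. Both are standard and may be quoted from~\cite{Barbu}; everything else is an elementary manipulation of the monotonicity inequality and of the resolvent identity established below.

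First I would record the basic estimate for $I_\lambda$. If $z_i=I_\lambda y_i$ for $i=1,2$, then $z_i-\lambda f(z_i)=y_i$, and pairing the difference with $z_1-z_2$ gives
\[ \|z_1-z_2\|^2 \le \|z_1-z_2\|^2-\lambda\langle f(z_1)-f(z_2),z_1-z_2\rangle = \langle y_1-y_2,z_1-z_2\rangle \le \|y_1-y_2\|\,\|z_1-z_2\|, \]
so $I_\lambda$ is nonexpansive, in particular single-valued; combined with Minty's theorem this shows $I_\lambda:H\to H$ is well defined. Moreover, setting $z=I_\lambda x$ one has $x=z-\lambda f(z)$, hence the resolvent identity $f_\lambda(x)=\lambda^{-1}(I_\lambda x-x)=f(I_\lambda x)$, which will be used throughout.

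For (i), Lipschitz continuity is immediate from the nonexpansiveness of $I_\lambda$: $\|f_\lambda(x)-f_\lambda(y)\|=\lambda^{-1}\|(I_\lambda x-I_\lambda y)-(x-y)\|\le 2\lambda^{-1}\|x-y\|$. For monotonicity, substitute $x-y=(I_\lambda x-I_\lambda y)-\lambda\big(f_\lambda(x)-f_\lambda(y)\big)$ and use $f_\lambda=f\circ I_\lambda$ to get
\[ \langle f_\lambda(x)-f_\lambda(y),x-y\rangle = \langle f(I_\lambda x)-f(I_\lambda y),I_\lambda x-I_\lambda y\rangle-\lambda\|f_\lambda(x)-f_\lambda(y)\|^2 \le -\lambda\|f_\lambda(x)-f_\lambda(y)\|^2\le 0, \]
the monotonicity of $f$ disposing of the first term. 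For (ii), with $z=I_\lambda x$ we have $z-x=\lambda f(z)=\lambda f_\lambda(x)$, so monotonicity $\langle f(z)-f(x),z-x\rangle\le 0$ reads $\lambda\|f(z)\|^2\le\lambda\langle f(x),f(z)\rangle\le\lambda\|f(x)\|\,\|f(z)\|$; dividing by $\lambda\|f(z)\|$ (trivial if $f(z)=0$) yields $\|f_\lambda(x)\|=\|f(z)\|\le\|f(x)\|$.

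For (iii), item (ii) gives $\|I_\lambda x-x\|=\lambda\|f_\lambda(x)\|\le\lambda\|f(x)\|\to 0$, so $I_\lambda x\to x$ strongly as $\lambda\to 0$, while $\{f_\lambda(x)\}_{\lambda\in(0,1]}$ is bounded by $\|f(x)\|$. Along any sequence $\lambda_n\downarrow 0$ extract a subsequence with $f_{\lambda_n}(x)\rightharpoonup\eta$ weakly; since $f_{\lambda_n}(x)=f(I_{\lambda_n}x)$ and $I_{\lambda_n}x\to x$ strongly, the strong$\times$weak closedness of the graph of $f$ (the monotonicity trick, $\langle f(I_{\lambda_n}x)-f(w),I_{\lambda_n}x-w\rangle\le 0$, passed to the limit, followed by an appeal to maximality) forces $\eta=f(x)$. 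As the limit is independent of the subsequence, $f_\lambda(x)\rightharpoonup f(x)$, and then
\[ \|f_\lambda(x)-f(x)\|^2 \le 2\|f(x)\|^2-2\langle f_\lambda(x),f(x)\rangle, \]
whose right-hand side tends to $2\|f(x)\|^2-2\|f(x)\|^2=0$ as $\lambda\to 0$ by (ii) and weak convergence, giving strong convergence and completing (iii). The only non-elementary points — hence the real obstacle — are the two invocations of maximal monotonicity just mentioned (Minty's theorem for the well-posedness of $I_\lambda$, and the graph closedness for identifying the weak limit in (iii)); since the statement is quoted verbatim from~\cite{Barbu}, in the paper one simply cites that reference for these facts.
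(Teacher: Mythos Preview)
Your argument is correct and is essentially the standard proof one finds in the monotone-operator literature. Note, however, that the paper does not give its own proof of this proposition at all: it is stated with an explicit citation to~\cite{Barbu}, page~49, Proposition~1.3, and is used as a black box. You anticipated this yourself in the last sentence of your proposal, and indeed the paper simply quotes the result; there is nothing further to compare.
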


Now we are ready to state and prove the main theorem of this section.
\begin{theorem}\label{theorem: Yosida approximation}
	Let $f$ be maximal monotone and let $X^\lambda$ be the mild solution of
	\begin{equation}\label{approximate_equation}
    dX^\lambda_t=AX^\lambda_t dt+f_\lambda(X^\lambda_t) dt + g(t,X^\lambda_{t-})d W_t + \int_E k(t,\xi,X^\lambda_{t-}) \tilde{N}(dt,d\xi),
	\end{equation}
	then we have
	\[ \lim_{\lambda\to 0} \mathbb{E} \left(\sup_{0\le s\le t} \|X^\lambda_s - X_s\|^2 \right) = 0 \]
\end{theorem}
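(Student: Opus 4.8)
The plan is to apply Theorem~\ref{theorem: continuity I} with $f_0 = f$, $f_1 = f_\lambda$, and identical noise coefficients $g_0 = g_1 = g$, $k_0 = k_1 = k$, and identical initial conditions $X^0_0 = X^\lambda_0 = X_0$. For this to be legitimate, I must first check that all the equations in the family satisfy Hypothesis~\ref{main_hypothesis} with constants \emph{independent of $\lambda$}. By Proposition~\ref{proposition: Yosida Properties}(i), $f_\lambda$ is monotone, so the semimonotonicity constant can be taken to be $M=0$ (in particular uniformly in $\lambda$), and since $g$ and $k$ are unchanged the Lipschitz constant $C$ is the same. The only delicate point is the linear growth constant $D$ in Hypothesis~\ref{main_hypothesis}(c): by Proposition~\ref{proposition: Yosida Properties}(ii) we have $\|f_\lambda(x)\| \le \|f(x)\|$, so $\|f_\lambda(x)\|^2 \le \|f(x)\|^2 \le D(1+\|x\|^2)$ and the same $D$ works for every $\lambda$. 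Thus the hypotheses of Theorem~\ref{theorem: continuity I} hold uniformly.

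With this in hand, the right-hand side of~\eqref{equation: continuity} collapses: the initial-condition term vanishes, the $g$-term and the $k$-term vanish identically, and we are left with
\[
	\mathbb{E}\sup_{0\le s\le t} e^{-2\alpha s}\|X^\lambda_s - X_s\|^2 \le 2 e^{C_1 t}\int_0^t e^{-2\alpha s}\,\mathbb{E}\|f_\lambda(X^0_s) - f(X^0_s)\|^2\, ds,
\]
where $X^0 = X$ is the solution of~\eqref{main_equation}. It then suffices to show the integral on the right tends to $0$ as $\lambda \to 0$, since $e^{-2\alpha t}$ is bounded below by a positive constant on $[0,t]$ and this converts the weighted estimate into the claimed unweighted one.

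The convergence of the integral is a dominated convergence argument on $[0,t]\times\Omega$ (with respect to $ds\,d\mathbb{P}$). For the pointwise convergence: for each fixed $(s,\omega)$, Proposition~\ref{proposition: Yosida Properties}(iii) gives $f_\lambda(X_s(\omega)) \to f(X_s(\omega))$ strongly in $H$, hence $\|f_\lambda(X_s) - f(X_s)\|^2 \to 0$ almost everywhere. For the domination: using $\|f_\lambda(x)\|\le\|f(x)\|$ and the triangle inequality, $\|f_\lambda(X_s) - f(X_s)\|^2 \le (2\|f(X_s)\|)^2 = 4\|f(X_s)\|^2 \le 4D(1+\|X_s\|^2)$, and the right side is integrable on $[0,t]\times\Omega$ because the mild solution $X$ is square integrable (Theorem~\ref{theorem:existence and uniqueness}) — more precisely $\sup_{0\le s\le t}\mathbb{E}\|X_s\|^2 < \infty$, so $\int_0^t \mathbb{E}(1+\|X_s\|^2)\,ds < \infty$. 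Dominated convergence then gives $\int_0^t \mathbb{E}\|f_\lambda(X_s) - f(X_s)\|^2\, ds \to 0$, and multiplying by the constant $2e^{C_1 t}$ finishes the proof.

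The main obstacle, such as it is, is the bookkeeping in the first paragraph: one must be careful that \emph{all} of Hypothesis~\ref{main_hypothesis} — and in particular the adaptedness/measurability and demicontinuity in part (a) — transfers to $f_\lambda$ uniformly. Demicontinuity is immediate since $f_\lambda$ is Lipschitz hence continuous, and measurability of $(I-\lambda f)^{-1}$ follows from that of $f$; so no real difficulty arises, but it is the only place where something could be overlooked. Everything after the reduction is a routine application of Theorem~\ref{theorem: continuity I} and dominated convergence.
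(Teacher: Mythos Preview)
Your proof is correct and follows essentially the same route as the paper: verify uniform constants via Proposition~\ref{proposition: Yosida Properties} (monotonicity and the bound $\|f_\lambda(x)\|\le\|f(x)\|$), then use the continuity theorem together with dominated convergence. The only cosmetic difference is that the paper invokes Corollary~\ref{corollary: continuity II} (which packages the dominated-convergence step), whereas you apply Theorem~\ref{theorem: continuity I} directly and carry out the domination argument by hand.
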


\begin{proof}
	By Proposition~\ref{proposition: Yosida Properties}-(i), $f_\lambda$'s are monotone and continuous and by (ii) they have linear growth condition with the same constant as $f$. Hence the assumptions of Corollary~\ref{corollary: continuity II} are satisfied and the statement follows.
\end{proof}

\begin{remark}
	The hemicontinuity assumption holds for many monotone operators, especially for Nemitsky operators associated with decreasing continuous real functions, since as will be mentioned in section~\ref{section:examples} these operators are in fact continuous and hence hemicontinuous on $L^2(\mathcal{D})$ and therefore they are maximal monotone by Theorem~\ref{theorem: hemicontinuous}. Hence Theorem~\ref{theorem: Yosida approximation} could be applied to examples of section~\ref{section:examples}.
\end{remark}

\section{Markov Property}\label{section: Markov Property}

In this section we assume that $f$, $g$ and $k$ are deterministic functions and satisfy Hypothesis~\ref{main_hypothesis}. Let $0\le s \le t$ and $\eta: \Omega \to H$ be $\mathcal{F}_s$-measurable and square integrable. We denote by $X(s,\eta,t)$ the value at time $t$ of the solution of~\eqref{main_equation} starting at time $s$ from $\eta$. Let $B_b(H)$ be the space of real valued bounded measurable functions on $H$. For $\varphi \in B_b(H)$ and $x\in H$ define
\[ P_{s,t}\varphi (x):= \mathbb{E}\varphi(X(s,x,t)). \]
$P_{s,t}$ is called the \emph{transition semigroup}.

\begin{theorem}[Markov Property]\label{theorem: markov property}
    For $0\le r \le s \le t$ and $\varphi\in B_b(H)$ we have almost surely
    \[ \mathbb{E}\left( \varphi(X(r,x,t)|\mathcal{F}_s \right) = P_{s,t}\varphi (X(r,x,s)) \qquad \mathbb{P}-\mathrm{almost\,\,sure}. \]
\end{theorem}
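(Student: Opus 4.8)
The plan is to establish the Markov property by a uniqueness-of-solutions argument combined with an independence-of-increments argument for the driving noise, which is the standard strategy for SDEs driven by Lévy noise. The core identity to prove is that, for $r \le s \le t$, the solution started at $r$ from $x$ satisfies
\[
X(r,x,t) = X\bigl(s, X(r,x,s), t\bigr) \qquad \mathbb{P}\text{-almost surely},
\]
the so-called flow (or cocycle) property. Once this is in hand, I would take $\varphi \in B_b(H)$ and condition on $\mathcal{F}_s$:
\[
\mathbb{E}\bigl(\varphi(X(r,x,t)) \mid \mathcal{F}_s\bigr) = \mathbb{E}\bigl(\varphi(X(s, X(r,x,s), t)) \mid \mathcal{F}_s\bigr).
\]
The random variable $X(r,x,s)$ is $\mathcal{F}_s$-measurable, while the solution map $\eta \mapsto X(s,\eta,t)$ depends only on the noise increments $W_u - W_s$ and $\tilde N((s,u] \times \cdot)$ for $u \in [s,t]$, which are independent of $\mathcal{F}_s$. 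By a standard "freezing lemma" (conditioning on an independent sigma-algebra, valid because $f,g,k$ are deterministic), the conditional expectation equals $\psi(X(r,x,s))$ where $\psi(y) := \mathbb{E}\varphi(X(s,y,t)) = P_{s,t}\varphi(y)$, which is exactly the claim.

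To prove the flow property, I would fix $s$ and note that both $t \mapsto X(r,x,t)$ (restricted to $t \ge s$) and $t \mapsto X(s, X(r,x,s), t)$ are càdlàg adapted processes; I claim both are mild solutions on $[s,\infty)$ of equation~\eqref{main_equation} with the same $\mathcal{F}_s$-measurable square-integrable initial condition $\eta = X(r,x,s)$ at time $s$. For the first process this requires rewriting the mild formulation~\eqref{mild_solution} of $X(r,x,t)$, splitting each integral at time $s$ and using the semigroup property $S_{t-u} = S_{t-s}S_{s-u}$ together with $X(r,x,s) = S_{s-r}x + \int_r^s S_{s-u} f(\cdots)du + \cdots$; this is a routine but slightly tedious algebraic manipulation. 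The second process is a mild solution essentially by definition of $X(s,\eta,\cdot)$. Then Theorem~\ref{theorem:existence and uniqueness} — more precisely, its pathwise uniqueness, which also follows from the $n=1$ case of Theorem~\ref{theorem: continuity I} applied with $f_1=f_0=f$, $g_1=g_0=g$, $k_1=k_0=k$ and equal initial data, giving $\mathbb{E}\sup_{s\le u\le t}\|X^1_u - X^0_u\|^2 = 0$ — forces the two processes to coincide almost surely.

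**The main obstacle** is the rigorous justification of the freezing lemma in this infinite-dimensional jump-diffusion setting: one must verify that the map $(\omega, y) \mapsto X(s,y,t)(\omega)$ admits a jointly measurable version, that for $\mathbb{P}$-a.e.\ fixed $y$ it solves the shifted equation driven by the post-$s$ noise, and that this noise is genuinely independent of $\mathcal{F}_s$ so that $\mathbb{E}[\Phi(X(r,x,s), (\text{post-}s\text{ noise})) \mid \mathcal{F}_s] = \mathbb{E}[\Phi(y, (\text{post-}s\text{ noise}))]\big|_{y = X(r,x,s)}$. Joint measurability can be obtained from the continuous dependence on initial data (Corollary~\ref{corollary: continuity II}, or directly the Lipschitz-in-$X_0$ estimate of Theorem~\ref{theorem: continuity I}), which gives continuity of $y \mapsto X(s,y,\cdot)$ in $L^2$ and hence a measurable selection; a monotone-class argument then extends the identity from product-form $\Phi$ and continuous $\varphi$ to all $\varphi \in B_b(H)$. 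The rest of the argument is standard bookkeeping with the semigroup property and the uniqueness theorem.
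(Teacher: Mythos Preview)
Your proposal is correct and follows essentially the same strategy as the paper: flow property via uniqueness, reduction to $\varphi\in C_b(H)$, and a freezing argument exploiting that $X(s,y,t)$ is independent of $\mathcal{F}_s$ for deterministic $y$, with continuous dependence on the initial datum (Theorem~\ref{theorem: continuity I}) doing the work of passing from deterministic to random initial data. The only cosmetic difference is that the paper carries out the freezing step by hand---approximating $\eta$ by simple $\mathcal{F}_s$-measurable random variables $\sum y_k\chi_{A_k}$ and passing to the limit---rather than appealing to an abstract joint-measurability/freezing lemma, but the underlying content is identical.
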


\begin{proof}
    Let $C_b(H)$ denote the set of real valued bounded continuous functions on $H$. It suffices to prove the theorem for $\varphi\in C_b(H)$ since every $\varphi\in B_b(H)$ is the pointwise limit of a uniformly bounded sequence in $C_b(H)$. Fix $r$, $s$ and $t$. We claim that for any square integrable random variable $\eta(\omega)$ which is $\mathcal{F}_s$ measurable, we have
    \begin{equation}\label{equation: proof of Markov 1}
        \mathbb{E}\left( \varphi(X(s,\eta,t))|\mathcal{F}_s\right)=P_{s,t}\varphi(\eta(\omega)) \qquad \mathbb{P}-\mathrm{almost\,\,sure}.
    \end{equation}
    We first prove the claim for the case that $\eta$ has a simple form $\eta=\sum y_k \chi_{A_k}$, where $y_k\in H$ and $A_k\in\mathcal{F}_s$ form a partition of $\Omega$. We have
    \begin{eqnarray*}
        \mathbb{E}\left( \varphi(X(s,\eta,t))|\mathcal{F}_s\right)&=&\mathbb{E}\left( \sum \varphi(X(s,y_k,t)) \chi_{A_k}\big|\mathcal{F}_s\right)\\
        &=& \sum \chi_{A_k} \mathbb{E}\left( \varphi(X(s,y_k,t))|\mathcal{F}_s\right).
    \end{eqnarray*}
    Note that $X(s,y_k,t)$ is independent of $\mathcal{F}_s$, hence
    \begin{eqnarray*}
        &=& \sum \chi_{A_k} \mathbb{E}\left(\varphi(X(s,y_k,t))\right)\\
        &=& \sum \chi_{A_k} P_{s,t}\varphi(y_k)=P_{s,t}\varphi(\eta(\omega)).
    \end{eqnarray*}
    Now for general $\eta$ choose a sequence $\eta_n$ of simple random variables such that tend to $\eta$ in $L^2(\Omega)$ and almost surely. We then have
    \[ \mathbb{E}\left( \varphi(X(s,\eta_n,t))|\mathcal{F}_s\right)=P_{s,t}\varphi(\eta_n(\omega)) \qquad \mathbb{P}-\mathrm{almost\,\,sure}. \]
    Now let $n\to \infty$. By continuity with respect to initial conditions, the left hand side converges to $\mathbb{E}\left( \varphi(X(s,\eta,t))|\mathcal{F}_s\right)$ and the right hand side converges to $P_{s,t}\varphi(\eta(\omega))$ and~\eqref{equation: proof of Markov 1} follows. Now in~\eqref{equation: proof of Markov 1} let $\eta(\omega)=X(r,x,s)$. By uniqueness of solution we have $X(r,x,t)=X(s,X(r,x,s),t)$ and the theorem follows.
\end{proof}

\section{Some Examples}\label{section:examples}

In this section we provide some concrete examples of semilinear stochastic evolution equations with monotone nonlinearity and L\'evy noise which the results of previous sections could be applied. The examples consist of stochastic partial differential equations of parabolic and hyperbolic type and a stochastic delay differential equation.

\begin{example}[Stochastic reaction-diffusion equations with multiplicative Poisson noise]

\label{example: finite_diemnsional_noise}
    In this example we consider a class of semilinear stochastic evolution equations with multiplicative Poisson noise. Let $\mathcal{D}$ be a bounded domain with a smooth boundary in $\mathbb{R}^d$. Consider the equation,

    \begin{equation}\label{equation: example_finite_dimensional_noise}
        \left\{\begin{array}{lll}
            d u (t)  &=& Au(t) dt + f(u(t,x)) dt + \eta u(t) dt + \int_E k(t,\xi,u(t^-,x)) \tilde{N}(dt,d\xi)\\
            u(0) & = & u_0.
        \end{array} \right.
    \end{equation}
	where $A$ is the generator of a $C_0$ semigroup on $L^2(\mathcal{D})$, $f:\mathbb{R}\to\mathbb{R}$ is a continuous decreasing function with linear growth and $k:[0,T]\times E \times \mathbb{R}\times\Omega\to \mathbb{R}$ is measurable and satisfies the Lipschitz condition
	\[ \mathbb{E} \int_E |k(s,\xi,u)-k(s,\xi,v)|^2 \mu(d\xi) \le C |u-v|^2 \]
and the linear growth condition
	\[ \mathbb{E} \int_E |k(s,\xi,u)|^2 \mu(d\xi) \le D (1+|u|^2) \]
and $u_0\in L^2(\mathcal{D})$. We prove that equation~\eqref{equation: example_finite_dimensional_noise} has a unique mild solution in $L^2(\mathcal{D})$.

	We show that equation~\eqref{equation: example_finite_dimensional_noise} satisfies the assumptions of Theorem~\ref{theorem:existence and uniqueness}. Let $H=L^2(\mathcal{D})$. We denote the Nemitsky operator associated with a function $f:\mathbb{R}\to\mathbb{R}$ by the same symbol. Since $f$ and $k$ are continuous and have linear growth, by Theorem (2.1) of Krasnosel'ski\u\i~\cite{Krasnoelskii}, the associated Nemitsky operators define continuous operators from $L^2(\mathcal{D})$ to $L^2(\mathcal{D})$ and have linear growth. Verifying the other assumptions is straight forward. Hence applying Theorem~\ref{theorem:existence and uniqueness} we conclude that equation~\eqref{equation: example_finite_dimensional_noise} has a unique mild solution in $L^2(\mathcal{D})$ and Theorem~\ref{theorem: continuity I} implies that the solution map $u_0 \mapsto u$ is Lipschitz in the sense that
	\[ \mathbb{E}\sup_{t\le T} \|u(t)-v(t)\|_{L^2(\mathcal{D})}^2 \le C \|u_0-v_0\|_{L^2(\mathcal{D})}^2 \]
We also can use Thoerem~\ref{theorem: Yosida approximation} to build Lipschitz approximations of equation~\eqref{equation: example_finite_dimensional_noise} and solve them numerically to approximate the solution of ~\eqref{equation: example_finite_dimensional_noise}.

\begin{remark}
	\begin{enumerate}
		\item Equation~\eqref{equation: example_finite_dimensional_noise} is exactly the same as the main equation studied in~\cite{Marinelli-Rockner-wellposedness}.
		\item As important examples for the operator $A$, one can denote any second order elliptic operator on $\mathcal{D}$.
		\item The same results hold if we add a Wiener noise term with Lipschitz coefficient.
		\item It is straight forward to generalise this example to the case that $f$ and $k$ depend also on $x$ and in that case it suffices to assume that $f(x,u)$ and $k(x,u)$ satisfy caratheodory condition, i.e they are continuous with respect to $u$ for almost all $x\in \mathcal{D}$ and are measurable with respect to $x$ for all values of $u$.
	\end{enumerate}
\end{remark}

\end{example}

\begin{example}[Second Order Stochastic Hyperbolic Equations with L\'evy noise] \label{example: finite_diemnsional_noise_hyperbolic}

	In this example we consider a hyperbolic SPDE with L\'evy noise. Let $\mathcal{D}$ be a bounded domain with a smooth boundary in $\mathbb{R}^d$, Consider the initial boundary value problem,
    \begin{equation}\label{equation: example_finite_dimensional_noise_hyperbolic}
        \left\{\begin{array}{lrll}
            \frac{\partial^2 u}{\partial t^2}  = \Delta u -\sqrt[3]{\frac{\partial u}{\partial t}} & + u(t^-,x) \frac{\partial Z}{\partial t}& \textrm{on} & [0,\infty) \times \mathcal{D}\\
            u =0 && \textrm{on} & [0,\infty) \times \partial \mathcal{D}\\
            u(0,x) = u_0(x) && \textrm{on} & \mathcal{D}.\\
            \frac{\partial u}{\partial t} (0,x) = 0 && \textrm{on} & \mathcal{D}.
        \end{array} \right.
    \end{equation}
    where $Z(t)$ is a real valued square integrable L\'evy process and $u_0(x) \in L^2(\mathcal{D})$ is the initial condition.
    We apply the results of previous sections and conclude that this equation has a unique mild solution in $H^1(\mathcal{D})$ (Sobolev space of weakly differentiable functions on $\mathcal{D}$ with derivative in $L^2(\mathcal{D})$). One can replace $-\sqrt[3]{x}$ by any continuous decreasing real function with linear growth. We generalize this equation as follows:

    \begin{equation}\label{equation: example_hyperbolic_finite_noise}
        \left\{\begin{array}{llrll}
             \frac{\partial^2}{\partial t^2} u(t,x)& = & \Delta u + f(u,\frac{\partial u}{\partial t}) + g_i(u(t^-,x)) \frac{\partial W_i}{\partial t} & & \\
             && + k_j(u(t^-,x)) \frac{\partial Z_j}{\partial t}  & \textrm{on} &  [0,\infty) \times \mathcal{D} \\
             u&=0 && \textrm{on} &  [0,\infty) \times \partial \mathcal{D} \\
             u(0,x)&=u_0(x)& & \textrm{on} & \mathcal{D} \\
            \frac{\partial u}{\partial t} (0,x)&=0 && \textrm{on} & \mathcal{D}.
          \end{array} \right.
    \end{equation}
	where $W_i(t), i=1,\ldots ,m$ are standard Wiener processes in $\mathbb{R}$ and $Z_j(t), j=1,\ldots ,n$ are pure jump L\'evy martingales in $\mathbb{R}$ with intensity measures $\nu_j(d\xi)$ and $u_0(x)\in L^2(\mathcal{D})$.

	Moreover assume that,
    \begin{hypothesis}\label{hypothesis: Second Order Hyperbolic finite dimensional noise}
        \begin{description}

            \item[(a)] $f:\mathbb{R}\times\mathbb{R}\to \mathbb{R}$ is measurable and continuous and is Lipschitz w.r.t first variable and semimonotone w.r.t second variable, i.e there exist constants $M$ and $C$ such that for any $a,a_1,a_2,b,b_1,b_2 \in \mathbb{R}$,
                \[ \|f(a_1,b)-f(a_2,b)\|\le C \|a_1-a_2\|. \]
                \[  f(a,b_1)-f(a,b_2) \le M (b_1-b_2), \]

            \item[(b)] There exists a constant $C>0$ such that for any $x\in \mathcal{D}$ and $a,b\in\mathbb{R}$,
                \[ \sum\limits_{i=1}^m |g_i(x,a)-g_i(x,b)|^2 + \sum\limits_{j=1}^n |k_j(x,a)-k_j(x,b)|^2 \le C |a-b|^2.\]

            \item[(c)] There exists a constant $D>0$ such that for any $a,b \in\mathbb{R}$,
                \[ |f(a,b)| + \sum\limits_{i=1}^m |g_i(a)| + \sum\limits_{j=1}^n |k_j(a)| \le D (1 + |a|+|b|).\]

        \end{description}
    \end{hypothesis}

	Note that $\Delta$ is self adjoint and negative definite on $L^2$. Moreover, we have
        \[ D((-\Delta)^\frac{1}{2})=  H^1(\mathcal{D}). \]
    Hence by Lemma B.3 of~\cite{Peszat-Zabczyk}, the operator
        \[ \mathcal{A}=\left( {\begin{array}{cc} 0&I\\  \Delta &0 \end{array}} \right)\]
    generates a $C_0$ semigroup of contractions on $H$.

    Let $K=E=\mathbb{R}$. We also define for $(u,v) \in H$ and $\phi\in K$ and $\xi\in E$,
        \[ \bar{f}(u,v)= \left( {\begin{array}{c} 0\\ f(u(x),v(x)) \end{array}} \right), \bar{g}(u,v)(\phi)= \left( {\begin{array}{c} 0\\ g(u(x)) \phi \end{array}} \right), \bar{k}(\xi,u,v)= \left( {\begin{array}{c} 0\\ k(u(x)) \xi \end{array}} \right) \]
We claim that $\bar{f}$, $\bar{g}$ and $\bar{k}$ satisfy Hypothesis~\ref{main_hypothesis}. The continuity of $\bar{f}$, $\bar{g}$ and $\bar{k}$ follows as in example~\ref{example: finite_diemnsional_noise} (note that the values of these functions are essentially in $L^2(\mathcal{D})$ and that $H^1(\mathcal{D})$ embeds continuously in $L^2(\mathcal{D})$). We show the semimonotonicity condition, the other conditions are straightforward.
        \begin{eqnarray*}
            \langle \bar{f}(u_1,v_1) - \bar{f}(u_2,v_2) , \left( {\begin{array}{c} u_1\\ v_1 \end{array}} \right) - \left( {\begin{array}{c} u_2\\ v_2 \end{array}} \right) \rangle = \langle f(u_1,v_1) - f(u_2,v_2) , v_1-v_2 \rangle &&\\
            = \langle f(u_1,v_1) - f(u_1,v_2) , v_1-v_2 \rangle + \langle f(u_1,v_2) - f(u_2,v_2) , v_1-v_2 \rangle &&
        \end{eqnarray*}
    where by Hypothesis~\ref{hypothesis: Second Order Hyperbolic}-(a) and Shwartz inequality
        \begin{eqnarray*}
            \le M \|v_1-v_2\|^2 + C \|u_1-u_2\| \|v_1-v_2\| \le (M+C)\left( \|u_1-u_2\|^2 + \|v_1-v_2\|^2 \right)
        \end{eqnarray*}
    Hence Hypothesis~\ref{main_hypothesis}-(a) holds with constant $M+C$. Now, if we let
        \[X(t)= \left( {\begin{array}{c} u(t)\\ \frac{\partial u}{\partial t}(t) \end{array}} \right) \]
    then equation~\eqref{equation: example_hyperbolic} can be written as
        \[dX(t)=\mathcal{A}X(t) dt+ \bar{f}(X(t))dt+ \bar{g}(X(t^-))dW_t + \int_E \bar{k}(\xi,X(t^-)) \tilde{N}(dt,d\xi) \]
    and hence by Theorem~\ref{theorem:existence and uniqueness} has a mild solution $u(t,x,\omega)$ with values in $H$ and with c\`adl\`ag trajectories.

\end{example}

\begin{example}[SPDE with Space-Time Noise] \label{example:general_parabolic}
    In this example we would like to consider a SPDE with infinite dimensional noise. A natural candidate for infinite dimensional noise is space-time white noise, but it can be shown that in dimensions greater than one, even the equation
        \[ \frac{\partial u}{\partial t}(t,x) = \Delta u(t,x) + \dot{W} (t,x) \]
    does not have a function valued solution (\cite{Peszat-Zabczyk}, Remark 12.2). In order to guarantee the existence of solution we assume that coefficients are operators on certain function spaces.

    Let $\mathcal{D}$ be as in Example~\ref{example: finite_diemnsional_noise}. Consider the initial boundary value problem on $\mathcal{D}$,
    \begin{equation}\label{equation: example_parabolic}
        \left\{\begin{array}{lrll}
             \frac{\partial u}{\partial t} =& \Delta u + f(u(t)) + g(u(t^-)) \frac{\partial W}{\partial t} & & \\
             &+ k(u(t^-)) \frac{\partial Z}{\partial t} & \textrm{on} & [0,\infty) \times \mathcal{D}  \\
             u=0 && \textrm{on} &  [0,\infty) \times \partial \mathcal{D} \\
             u(0,x)=0 && \textrm{on} & \mathcal{D}
          \end{array} \right.
    \end{equation}
    where $W_t$ is a cylindrical Wiener process on $L^2(\mathcal{D})$ and $Z_t$ is a pure jump L\'evy martingale on $L^2(\mathcal{D})$, and by $u(t)$ we mean $u(t,.)$.

    Let $n$ be an integer. We wish to solve this equation in the function space $H_n$ introduced in Walsh~\cite{Walsh}. Let $\{\phi_j\}$ be the complete orthonormal basis for $L^2(\mathcal{D})$ consisting of eigenfunctions of $\Delta$ with Dirichlet boundary condition and $-\lambda_j<0$ be the corresponding eigenvalues. Let $H_n$ be the Hilbert space that has as a complete orthonormal basis the set $\{e_j=(1+\lambda_j)^{-\frac{n}{2}} \phi_j\}$. Obviously $H_0=L^2(\mathcal{D})$ and the spaces $H_n$ can be continuously embedded in each other as
        \[ \cdots \subset H_n \subset \cdots \subset H_1 \subset L^2(\mathcal{D}) \subset H_{-1} \subset \cdots \subset H_{-n} \subset \cdots. \]

    Assume moreover,

    \begin{hypothesis}\label{hypothesis: space-time noise parabolic}
        \begin{description}

            \item[(a)] $f:H_n \to H_n$ is measurable, demicontinuous and there exists a constant $M$ such that for any $u,v \in H_n$,
                \[ \langle f(u)-f(v),u-v \rangle \le M \|u-v\|^2,\]

            \item[(b)] $g:H_n \to L_{HS}(L^2(\mathcal{D}),H_n)$ and $k:H_n \to L(L^2(\mathcal{D}),H_n)$ are Lipschitz.

            \item[(c)] There exists a constant $D$ such that for $u\in H_n$,
                \[ \| f(u)\|^2 + \| g(u)\|^2 + \|k(u)\|^2 \le D (1+\|u\|^2),\]

        \end{description}
    \end{hypothesis}

    $A$ generates a $C_0$ semigroup $S_t$ on $H$ where $S_t e_j = e^{-t\lambda_j} e_j$. Let $K=E=L^2(\mathcal{D})$ and let $\tilde{N}(dt,d\xi)$ be the compensated Poisson random measure on $E$ corresponding to the L\'evy process $Z_t$ with intensity measure $\nu(d\xi)$, and define
        \[ \bar{k}(\xi,u):= k(u)(\xi) \]
    Now, it is easy to verify that $f$, $g$ and $\bar{k}$ satisfy Hypothesis~\ref{main_hypothesis} and therefore equation~\eqref{equation: example_parabolic} can be written in the form of equation~\eqref{main_equation} with initial condition $0$ and hence~\eqref{equation: example_parabolic} has a mild solution $u(t,x,\omega)$ with values in $H_n$ and with c\`adl\`ag trajectories.

    \begin{remark}
        In Hypothesis~\ref{hypothesis: space-time noise parabolic}-(b) one can replace the condition on $g$ by
            \[ g:H_n \to L(W^{-p,2}(\mathcal{D}),H_n) \]
        where $p>\frac{d}{2}$ is a real number, since the embedding $L^2(\mathcal{D})\hookrightarrow W^{-p,2}(\mathcal{D})$ is Hilbert-Schmidt (see Walsh~\cite{Walsh} page 334).
    \end{remark}

	\begin{remark}
		One can use the same arguments as above and the technique used in Example~\ref{example: finite_diemnsional_noise_hyperbolic} to study the second order hyperbolic equation,

    \begin{equation}\label{equation: example_hyperbolic}
        \left\{\begin{array}{lrll}
             \frac{\partial^2}{\partial t^2} u(t,x) = & \Delta u + f(u(t),\frac{\partial u}{\partial t}) + g(u(t^-)) \frac{\partial W}{\partial t} & & \\
             & + k(u(t^-)) \frac{\partial Z}{\partial t}  & \textrm{on} &  [0,\infty) \times \mathcal{D} \\
             u=0 && \textrm{on} &  [0,\infty) \times \partial \mathcal{D} \\
             u(0,x)=0 && \textrm{on} & \mathcal{D} \\
            \frac{\partial u}{\partial t} (0,x)=0 && \textrm{on} & \mathcal{D}.
          \end{array} \right.
    \end{equation}
	where
	    \begin{hypothesis}\label{hypothesis: Second Order Hyperbolic}
        \begin{description}

            \item[(a)] $f:H_{n+1}\times H_n \to H_n$ is measurable, demicontinuous and there exists constants $M$ and $C$ such that for any $u,u_1,u_2 \in H_{n+1}, v,v_1,v_2\in H_n$,
                \[ \langle f(u,v_1)-f(u,v_2),v_1-v_2 \rangle \le M \|v_1-v_2\|^2, \]
                \[ \|f(u_1,v)-f(u_2,v)\|\le C \|u_1-u_2\|. \]

            \item[(b)] $g:H_{n+1} \to L_{HS}(L^2(\mathcal{D}), H_n)$ and $k:H_{n+1} \to L(L^2(\mathcal{D}), H_n)$ are Lipschitz.

            \item[(c)] There exists a constant $D$ such that for $u \in H_{n+1}$, and $v \in H_n$
                \[ \| f(u,v)\|^2 + \| g(u)\|^2 + \|k(u)\|^2 \le D (1+\|u\|^2+\|v\|^2).\]

        \end{description}
    \end{hypothesis}
	
	It follows that under Hypothesis~\ref{hypothesis: Second Order Hyperbolic}, equation~\eqref{equation: example_hyperbolic} has a mild solution $u(t,x,\omega)$ with values in $H_{n+1}$ and with c\`adl\`ag trajectories.
	\end{remark}

\end{example}

\begin{example}[Stochastic Delay Equations]
    Consider the following delay differential equation in $\mathbb{R}$,

    \begin{equation}\label{equation: example_delay_0}
        \left\{\begin{array}{ll}
            dx(t)=&\left( \int_{-1}^0 x(t+\theta) \right)dt - \sqrt[3]{x(t)}dt + x(t) dZ_t \\
            x(\theta)=& \sin(\pi \theta) ,\quad \theta \in (-1,0].
          \end{array} \right.
    \end{equation}
	where $Z_t$ is a real valued square integrable L\'evy process. We apply the results of previous sections and show that this equation has a unique cadlag mild solution. Moreover, $- \sqrt[3]{x}$ can be replaced by any continuous decreasing real function with linear growth. We generalize the above equation as follows:

    \begin{equation}\label{equation: example_delay}
        \left\{\begin{array}{ll}
            dx(t)=&\left( \int_{-h}^0 \mu (d\theta)x(t+\theta) \right)dt + f(x(t))dt + g(x(t))dW_t + k(x(t)) dZ_t \\
            x(\theta)=& \psi(\theta), \theta \in (-h,0].
          \end{array} \right.
    \end{equation}
    where $h>0$, $\mu$ is a measure on $(-h,0]$ with finite variation, $W_t$ is a standard Wiener process in $\mathbb{R}$, $Z_t$ is a pure jump L\'evy martingale in $\mathbb{R}$ and $\psi(\theta)\in L^2((-h,0])$. Moreover assume that,

    \begin{hypothesis}\label{hypothesis: delay}
        \begin{description}

            \item[(a)] $f:\mathbb{R} \to \mathbb{R}$ is continuous and there exists a constant $M$ such that for any $a<b$,
                \[ f(a)-f(b) \le M (a-b),\]

            \item[(b)] $g:\mathbb{R} \to \mathbb{R}$ and $k:\mathbb{R} \to \mathbb{R}$ are Lipschitz.

            \item[(c)] There exists a constant $D$ such that for $a\in \mathbb{R}$,
                \[ | f(a)|^2 + | g(a)|^2 + |k(a)|^2 \le D (1+a^2).\]

        \end{description}
    \end{hypothesis}

    \begin{remark}
        Peszat and Zabczyk~\cite{Peszat-Zabczyk} have studied this delay differential equation with Lipschitz coefficients. We have replaced Lipschitzness of $f$ by the weaker assumption of semimonotonicity.
    \end{remark}

    Let $H=\mathbb{R}\times L^2((-h,0])$ and define the operator $A$ on $H$ by
        \[ A \left( {\begin{array}{c} u \\ v \end{array}} \right) = \left( {\begin{array}{c}  \int_{-h}^0 v(\theta) \mu(d\theta) \\ \frac{\partial v}{\partial \theta} \end{array}} \right). \]
    According to Da Prato and Zabczyk~\cite{DaPrato_Zabczyk_book}, Proposition A.25, the operator $A$ with domain
        \[ D(A)=\left\{ \left( {\begin{array}{c} u \\ v \end{array}} \right) \in H : v\in W^{1,2}(-h,0), v(0)=u \right\} \]
    generates a $C_0$ semigroup $S_t$ on $H$. Let $K=E=\mathbb{R}$ and let $\tilde{N}$ be the compensated Poisson random measure associated with $Z_t$. Define for $\left( {\begin{array}{c} u \\ v \end{array}} \right)\in H$ and $\xi\in \mathbb{R}$,
        \[ \bar{f}(u,v)= \left( {\begin{array}{c} f(u) \\ 0 \end{array}} \right), \bar{g}(u,v)= \left( {\begin{array}{c} g(u) \\ 0 \end{array}} \right), \bar{k}(\xi,u,v)= \left( {\begin{array}{c} \xi k(u) \\ 0 \end{array}} \right).\]
    It is easy to verify that $\bar{f}$, $\bar{g}$ and $\bar{k}$ satisfy Hypothesis~\ref{main_hypothesis}. Now, if we let
        \[X(t)= \left( {\begin{array}{c} x(t)\\ x_t \end{array}} \right) \]
    where $x_t(\theta)=x(t+\theta)$ for $\theta\in (-h,0]$, then equation~\eqref{equation: example_delay} can be written as
        \[dX(t)=A X(t) dt+ \bar{f}(X(t))dt+ \bar{g}(X(t^-))dW_t + \int_E \bar{k}(\xi, X(t^-)) \tilde{N}(dt,d\xi) \]
    with initial condition
        \[ X(0)= \left( {\begin{array}{c} \psi(0) \\ \psi \end{array}} \right) \]
    and hence by Theorem~\ref{theorem:existence and uniqueness} has a unique mild solution $x(t,\omega)$ with c\`adl\`ag trajectories and the solution depends continuously on initial condition.
\end{example}

\subsection*{Acknowledgments}
	The authors wish to thank Professor Carlo Marinelli for introducing them to the valuable article~\cite{Marinelli-Rockner-wellposedness}.

\end{document}